\documentclass[12pt]{amsart}
\usepackage{amssymb}
\usepackage{amsmath}
\usepackage{epsfig}
\usepackage{ulem}
\usepackage{graphics}
\usepackage{hyperref}
\usepackage{tikz-cd}
\usepackage{enumerate,amsmath,enumitem}


\newcommand{\RR}{{\mathbb R}}
\newcommand{\CC}{{\mathbb C}}
\newcommand{\NN}{{\mathbb N}}
\newcommand{\ZZ}{\mathbb Z}
\newcommand{\QQ}{{\mathbb Q}}

\newcommand{\bS}{{\mathbb S}}
\newcommand{\TT}{{\mathbb T}}

\def\B{{\mathcal B}}

\def\L{{\mathcal L}}

\def\P{{\mathcal P}}
\def\R{{\mathcal R}}

\def\V{{\mathcal V}}

\def\dim{{\rm dim \:   }}

\def\F{{\tilde{\Phi}}}  
\def\Fetoile{{{\Phi}}}
\def\H{{\Psi}}
\def\G{{\Upsilon}}


\numberwithin{equation}{section}

\newtheorem{theo}{Theorem}
\newtheorem{prop}[theo]{Proposition}
\newtheorem{coro}[theo]{Corollary}
\newtheorem{lemma}[theo]{Lemma}
\newtheorem{defi}[theo]{Definition}
\newtheorem{exemple}[theo]{Example}

\theoremstyle{remark}

\begin{document}

\title{Conjugacy of Unimodular Pisot  Substitution Subshifts to Domain Exchanges}

\author[{F.} {Durand}]{ {Fabien} {Durand}}
\email{fabien.durand@u-picardie.fr}

\author[{S.} {Petite}]{ {Samuel} {Petite}}
\email{samuel.petite@u-picardie.fr}
\address{Laboratoire Ami\'enois
de Math\'ematiques Fondamentales et Appliqu\'ees, CNRS-UMR 7352,
Universit\'{e} de Picardie Jules Verne, 33 rue Saint Leu, 80000
Amiens, France.}

\thanks{Both authors acknowledge the ANR Program ANR Project IZES ANR-22-CE40-0011.}

\subjclass{Primary: 37B10; Secondary: 37B20} \keywords{minimal
Cantor systems, finite rank Bratteli-Vershik dynamical systems,
eigenvalues, Pisot conjecture, substitutions}

\begin{abstract}
We prove that any unimodular Pisot substitution subshift  is measurably conjugate to  a domain exchange in  an Euclidean space  which { is a finite topological extension of a translation on a torus.}
This generalizes the pioneer works of Rauzy and Arnoux-Ito providing geometric realizations to any unimodular  Pisot substitution without any additional combinatorial condition.  
\end{abstract}

\date{}
\maketitle \markboth{Fabien Durand, Samuel Petite}{Unimodular Pisot  Substitutions and Domain Exchanges}

\section{Introduction}

A classical way to tackle problems of diffeomorphism dynamics is to code the orbits of the points through a well-chosen finite partition to obtain a "nice" subshift  which is easier to study (see the emblematic works \cite{Hadamard:1898} and \cite{Morse:1921}).
The interesting dynamical properties of the subshift  may translate into relevant properties of the original dynamical system. 

In the seminal paper \cite{Rauzy:1982}, G. Rauzy proposed to go in the other way round: take your favorite subshift and try to give it a geometrical representation.
He took what is now called the Tribonacci substitution given by 
$$
1 \mapsto 12, 2 \mapsto 13 \hbox{ and } 3 \mapsto 1,
$$

\noindent and proved that the subshift it generates is measure theoretically conjugate to a rotation on  the torus $\mathbb{T}^2$.
A similar result was already known for substitutions of constant length under some necessary and sufficient conditions \cite{Dekking:1978}.
Later, in \cite{Arnoux&Rauzy:1991},  the authors showed that subshifts whose block complexity is $2n+1$, and satisfy some technical conditions, are measure theoretically conjugate to an interval exchange on 6 intervals on the circle.
This family of subshifts includes the subshift generated by the Tribonacci substitution.

The Tribonacci substitution   has the specificity to be a unimodular and Pisot substitution, that is, its incidence matrix has determinant 1, its characteristic polynomial is irreducible and its dominant eigenvalue is a Pisot number (all its algebraic conjugates are, in modulus, strictly less than $1$).
These properties provide key arguments to prove the main result in \cite{Rauzy:1982} mentioned above.   
It  naturally leads to what is now called the {\bf Pisot conjecture} for symbolic dynamics: 

\medskip 

{\em Let $\sigma$ be a Pisot substitution.
Then, the subshift it generates has purely discrete spectrum, {i.e.}, it is measure theoretically conjugate to a group translation. }
\medskip

Many attempts have been done to solve this conjecture.
The usual  strategy is the same as  Rauzy's in \cite{Rauzy:1982}.
and also applies to the non-unimodular cases \cite{Siegel:2003,Sing:2006}: show first that the substitution subshift is measurably conjugate to a domain exchange (see Definition \ref{def:domainexchange}), then that this system is measurably conjugate to a  rotation on  a torus. 

In a widely cited, but unpublished, manuscript \cite{Host:1992} (see also Section 6.3.3 in \cite{Queffelec:2010}), B. Host proved that the Pisot conjecture is true for unimodular substitutions defined on two letters, provided a condition  called {\em strong coincidence condition} holds. 
This combinatorial condition first appeared in \cite{Dekking:1978}. 
Then, it was shown in \cite{Barge&Diamond:2002} that this condition is satisfied for any unimodular Pisot substitution on two letters. 
So the Pisot conjecture is true in this case \cite{Hollander&Solomyak:2003}.

Following the  Rauzy's strategy, but in a  different way than the Host's approach, in \cite{Arnoux&Ito:2001} the authors associate to any unimodular Pisot substitution a self-affine domain exchange called {\em Rauzy fractal}. They proved, this domain exchange is measurably conjugate to the substitution subshift provided the substitution satisfies  a combinatorial condition.   Few time later, Host's results were  generalized  by V. Canterini and A. Siegel in  \cite{Canterini&Siegel:2001}  to any unimodular Pisot substitution   and to the non-unimodular case \cite{Siegel:2003, Siegel:2004},  but without avoiding  the strong coincidence condition. 

The group translation is explicit in function of the incidence matrix of the substitution. It is  known that it corresponds to its maximal equicontinuous factor that also is its Kronecker factor  \cite{Barge&Kwapisz:2006}.
These works  led to the development of a huge number of techniques to study the  Rauzy fractals (see for instance  \cite{Fogg:2002} and references therein).  
Let us also mention  other fruitful  geometrical approaches by using tilings in  \cite{Barge&Kwapisz:2006, Baker&Barge&Kwapisz:2006} and more recently   in \cite{Barge:2016,Barge:2018} for the one-dimensional case. For more references on the topic, we refer to the  survey \cite{ABBLS:2015}.

The reducible case has also been investigated. It is known the conjecture is false in this setting (see \cite{EiIto:2005} and  Example \ref{ex:Sing}).
Nevertheless some examples have discrete spectrum \cite{Baker&Barge&Kwapisz:2006}. 

More recently a remarkable result has been obtained by M. Barge \cite{Barge:2018}: the Pisot conjecture is true for $\beta$-substitutions.

\medskip 

In this paper we generalize the Arnoux-Ito theorem \cite{Arnoux&Ito:2001}, also proven in \cite{Canterini&Siegel:2001}.
\begin{theo}
\label{theo:main}
 Let $\sigma $ be a unimodular Pisot substitution on $p+1$ letters.
Then  the  substitution subshift associated to $\sigma$  is measurably conjugate to an exchange
of domain by translation on the plane $\mathbb{R}^{p}$ through a continuous map. 
Moreover, it continuously factorizes via an a.s. constant-to-one map, onto a translation on the torus $\mathbb{T}^{p}$.
\end{theo}

The toral rotation is explicitly described in \cite{Canterini&Siegel:2001} (see also \cite{Fogg:2002}).
Let us point out that unlike all the previous works about the Pisot conjecture, no combinatorial condition (like the {\em strong coincidence property}) is required  to apply Theorem \ref{theo:main}.  In this paper we ignore this combinatorial condition showing this is not necessary to ensure the  measure theoretical conjugacy with a domain  exchange.
It provides a geometric realization, in terms of domain exchange, to any unimodular Pisot substitution subshift.
Notice the domain exchange may, a priori,  be different from the usual Rauzy fractal. 
To prove the Pisot conjecture, one still have to prove this domain exchange is measurably conjugate to the toral rotation.

\subsection{Comments on the proof of Theorem \ref{theo:main}} 
\label{sec:comments}
We prove  Theorem \ref{theo:main} as a corollary of a more general one  (Theorem \ref{theo:main2}, Subsection \ref{sec:proof}) that covers  a broader family of substitutions including some reducible Pisot substitutions.  
Let us make some comments on the proof of Theorem \ref{theo:main2}.  
Figure \ref{fig:blabla} might be useful to follow our strategy.

The canonical approach, coming from   Rauzy's seminal article \cite{Rauzy:1982}, is  related to the combinatorial properties of the subshift $\Omega(\sigma)$ associated to the substitution $\sigma$. A  map $\Phi_{Rauzy} \colon \Omega(\sigma) \to \RR^{p}$  is defined through the  prefix-suffix  automaton for irreducible unimodular Pisot substitutions \cite{Canterini&Siegel:2001}. Under coincidence hypotheses,  the applications $\Phi_{Rauzy}$ and $\pi \circ \Phi_{Rauzy}$, where $\pi \colon \RR^{p}  \to \TT^{p}$ denotes the canonical projection, are measurable conjugacy maps onto a domain exchange and  a minimal rotation.  
The key properties, to prove the measurable conjugacy  are then:
 The combinatorial property of coincidence;  The exchange by the transformation $\Phi_{Rauzy}$   of the shift transformation with a piecewise translation;  The  exchange by the same map of  the substitution transformation with the linear action of its incidence matrix. 

We overcome the  combinatorial coincidence condition by considering a conjugate subshift given by a proper substitution $\xi$ (that is a substitution mapping the letters to words starting with the same letter and ending with the same letter, see definition in Section \ref{def:subproper}), hence satisfying a form of coincidence.
 But the price to pay is to have to consider substitutions with a broader incidence matrix spectrum. In addition to the algebraic conjugates of the Pisot number it may include 0 and roots of unity as  eigenvalues. We call such substitutions    weakly irreducible Pisot substitutions (see Definition  \ref{def:Pisotsub}). 
It is not surprising because  being Pisot is not invariant under conjugacy, unlike being weakly irreducible Pisot.
In this context, the original strategy of  Rauzy does not work since it exist weakly irreducible Pisot substitutions defining  weakly-mixing subshifts (see Example \ref{ex:Solomyak} and discussion in Section \ref{sec:discuss}). 

Nevertheless the proper substitution allows to use Bratteli-Vershik representations.
They are  analogous to  prefix-suffix decompositions but take place into a general framework beyond substitution subshifts. 
Approximations of the eigenfunctions  through the return maps defining our Bratteli-Vershik representations (Proposition \ref{prop:condcont}), gives   a  canonical  continuous map $\tilde{\Phi} \colon \Omega(\xi) \to \RR^p$  
such that  $\pi \circ \tilde{\Phi}$  commutes the shift action  with the expected rotation  $(\mathbb{T}^{p} , x \mapsto x+ \alpha)$ (Lemma \ref{lemma:deltaV}).
Unfortunately, unlike $\Phi_{Rauzy}$,   the map $\F$ may not  exchange the substitution action with the one of its incidence matrix.  
In \cite{Host:1986,Canterini&Siegel:2001} it is a key argument to prove the conjugacy of the Pisot subshift with the domain exchange.

\begin{center}
\begin{figure}[h]
\begin{tikzcd}
(\Omega (\xi ) , \mu) \arrow[rr,"\tilde{\Phi}"] \arrow[dr,"\Phi"] && \tilde{\Phi } (\Omega (\xi )) \subset \RR^{p} \arrow[rr,"\pi"] & &(\mathbb{T}^{p} ,  \lambda_{\TT^{p}})\\
                                                        & (E , \lambda_{E}) \arrow[dashed,ur,"\Psi"] \arrow[dashed, rrru,"\pi \circ \Psi"]&&&
\end{tikzcd}
\caption{Sketch of the maps involved in the proof of Theorem \ref{theo:main2}: black arrows correspond to continuous maps and dashed arrows to almost everywhere defined maps.}
\label{fig:blabla}
\end{figure}
\end{center}

A modification of $\tilde{\Phi}$ by a piecewise translation overcomes this problem by giving a continuous  map $\Fetoile   \colon \Omega(\xi) \to E \subset  \RR^p$ having this commutation property (Lemma \ref{lemma:deltaV}). 
Then, the same strategy as in \cite{Canterini&Siegel:2001} enables to prove that $\Fetoile$
maps  by pullback   the normalized Lebesgue measure $\lambda_{E} = \frac 1{\lambda(E)}\lambda$ on $E$ to  the $S$-invariant measure $\mu$ (Lemma \ref{lemma:perron}).  Moreover it realizes a measurable conjugacy between  the subshift and a domain exchange $T$ on $E \subset \RR^{p}$ (Proposition \ref{prop:Finj} and proof of Theorem \ref{theo:main2}).  

However, in our  framework, due to the possible existence of a non-trivial kernel of the incidence matrix,
moding out by $\ZZ^p$ may not provide a factor onto a toral rotation,
that is, the map $\pi \circ \Fetoile$ may not commute with a rotation.
This complicates the proof. 
To solve this problem, we control the commutation fault by a function $\G$ (see \eqref{eq:defPhiGamma}) that enables us to extract a  piecewise translation $\H$ so that $\H \circ \Fetoile = \F$ a.e. (Equation \ref{eq:relPhiGamma}  gives the relation between these maps). 
We use then the commutation property of $\F$ to get the map  $\pi\circ \H \colon (E,T) \to (\TT^{p}, x \mapsto x+\alpha)$ is  an a.e. constant-to-one factor map. 
We deduce finally the map $\pi \circ \F \colon (\Omega(\xi), S) \to (\TT^{p}, x \mapsto x+\alpha)$ is an a.e. constant-to-one factor map   (see proof of Theorem \ref{theo:main2}). 

 As noticed by P. Mercat, in general the map $\psi$ is not trivial. For instance the usual Rauzy fractal associated to  the Pisot substitution $1\mapsto  1213$, $2\mapsto 121$, $3\mapsto 21$ is not a fundamental domain of the torus. Roughly speaking, the map $\psi$ translates some part of the Rauzy fractal to get a fundamental domain.  

It is important to notice that the measurable  conjugacy  with the domain exchange does not imply the measurable conjugacy with a rotation as illustrated by Example \ref{ex:Sing}. 

\subsection{Organisation of the paper}  We postpone to the next section the  basic definitions and notions we use for dynamical systems, substitution subshifts and Pisot substitutions. In Section \ref{sec:returnsub}, we prove by using the notion of return words, that any substitution subshift is conjugate to a { proper} substitution subshift.
We then show, in  Section \ref{sec:domex},  that such a subshift,  having  enough  multiplicatively independent eigenvalues (see Hypothesis \eqref{hypoii}), is  measurably conjugate to a self-affine domain exchange (Theorem \ref{theo:main2}).  A byproduct of these two results gives  Theorem \ref{theo:main}. 
The proof follows the same strategy as in \cite{Canterini&Siegel:2001}. 

We would like to emphasize that the irreducibility of unimodular Pisot substitutions  implies that the number of multiplicatively independent non trivial eigenvalues equals  $\sum_{0< \vert \lambda \vert <1} \textrm{ dim }E_{\lambda}$, where $E_{\lambda}$ denotes the eigenspace  associated with the eigenvalue $\lambda$ of  the substitution matrix.
This property is crucial in our proof and might be defined not only for substitution subshifts.
This suggests a possible extension of our main result to linearly recurrent subshifts, or $S$-adic subshifts, like in \cite{Berthe&Jolivet&Siegel:2012}.

 \subsection*{Acknowledgment} We thank B. Solomyak for providing us the  nice Example  \ref{ex:Solomyak} and P. Mercat for useful comments. We thank the anonymous reviewers for their careful reading  and their many insightful comments and suggestions.

\section{Basic definitions}\label{sec:basicdef}

\subsection{Words and sequences}
 
An {\it alphabet} $A$ is a finite set of elements called {\it
  letters}.
Its cardinality is $\#A$.
A {\it word} over $A$ is an element of the free monoid
generated by $A$, denoted by $A^*$. 
Let $x = x_0x_1 \cdots x_{n-1}$
(with $x_i\in A$, $0\leq i\leq n-1$) be a word, its {\it length} is
$n$ and is denoted by $|x|$. 
The {\it empty word} is denoted by $\epsilon$, $|\epsilon| = 0$. 
The set of nonempty words over $A$ is denoted by $A^+$. 
The elements of $A^{\mathbb{Z}}$ are called {\it sequences}. 
If $x=\cdots x_{-1} x_0 x_1\cdots$ is a sequence (with $x_i\in A$, $i\in \mathbb{Z}$) and $I=[k,l]$ an interval of
$\mathbb{Z}$ we set $x_I = x_k x_{k+1}\cdots x_{l}$ and we say that $x_{I}$
is a {\it factor} of $x$.  
The set of factors of length $n$ of $x$ is written
$\mathcal{L}_n(x)$ and the set of factors of $x$, or the {\it language} of $x$,
is denoted by $\mathcal{L}(x)$. 
The {\it occurrences} in $x$ of a word $u$ are the
integers $i$ such that $x_{[i,i + |u| - 1]}= u$. 
If $u$ has an occurrence in $x$, we also say that $u$ {\em appears} in $x$.
When $x$ is a word or an element of $A^\NN$ (called {\em one-sided sequence}),
we use the same terminology with similar notations.
The words $x_{[0,n]}$ are called {\em prefixes} of $x$.

Let $x$ be an element of $A^\ZZ$ or $A^\NN$.
A word $u$ is {\em recurrent} in $x$ if it appears in $x$ infinitely many times.
A sequence $x$ is {\em uniformly recurrent} if all $u\in \mathcal{L}(x)$ is recurrent in $x$  and the difference of two consecutive occurrences of $u$ in $x$ is bounded.

\subsection{Morphisms and matrices} 
Let $A$ and $B$ be two finite  alphabets. Let $\sigma$ be a {\it morphism} 
from $A^*$ to $B^*$. 
When $\sigma (A) = B$, we say $\sigma$ is a {\em coding}. 
If $\sigma (A)$ is included in $B^+$, we say $\sigma$ is
{\it non erasing}.
In this case, it induces by concatenation two maps also denoted $\sigma$:
One from $A^\NN$ to $B^\NN$ and the other from $A^{\mathbb{Z}}$ to $B^{\mathbb{Z}}$ satisfying $\sigma (x) = y$ with $\sigma (x_{[0,+\infty )}) = y_{[0,+\infty )}$ and $\sigma (x_{(-\infty  , 0]}) = y_{(-\infty , 0]}$. 

We naturally associate to the morphism $\sigma$ an {\em incidence matrix} $M_{\sigma} =
(m_{i,j})_{i\in B , j \in A }$, where $m_{i,j}$ is the number of
occurrences of $i$ in the word $\sigma(j)$. 
Notice that when $\sigma $ is an endomorphism, for any positive integer $n$ we get $M_{\sigma^n} = M_{\sigma}^n$.

We say that an endomorphism  is {\em primitive} whenever its incidence matrix is primitive ({i.e.}, when it has a power with strictly positive coefficients).  
The Perron's theorem tells that the dominant eigenvalue is a real simple root of the characteristic polynomial and is strictly greater than the modulus of any other eigenvalue. 
 Such eigenvalue with its eigenvectors are called {\em Perron  number} and {\em Perron eigenvectors}.

\subsection{Substitutions}\label{def:subproper}

Let us recall the definition of {\it substitution} as it appears in \cite{Queffelec:2010}: It is a non erasing
 endomorphism.

A substitution $\sigma$ is {\em left  proper} (resp. {\em right proper})  if all words $\sigma (b)$, $b \in A$, starts (resp. ends) with the same letter. 
Notice that all word  $\sigma^n(b)$, $b \in A$ for every power $n\ge 1$, starts (resp. ends) also with the same letter. 
For short, we say that a  left and right proper substitution is {\em proper}.

The {\em language} of $\sigma : A^* \to A^*$, denoted by $\L (\sigma )$, is the set of words having an occurrence in $\sigma^n (b)$ for some $n\in \mathbb{N}$ and $b\in A$. Notice that when $\sigma $ is primitive, then $\L(\sigma^n) = \L(\sigma)$ for any positive integer $n$.  

A {\em fixed point} of $\sigma $ is an element $x$ of $A^\ZZ$ such that $\sigma (x) = x$.
We say it is {\em admissible} whenever $\L (x)$ is a subset of $\L (\sigma )$. 
It could happen that a fixed point is not admissible as shown by the substitution defined by $1\mapsto 121$ and $2\mapsto 212$ and its non-admissible fixed point $x=\cdots 21212 . 21212 \cdots$.
It could also happen that $\sigma $ has no fixed point.
Nevertheless, if $\sigma$ is primitive, there always exists some $p>0$ such that $\sigma^p$ has an admissible fixed point. 
Indeed, there exist some $p>0$ and letters $l,r$ such that 

\begin{enumerate}
\item
$r$ is the last letter of $\sigma^p (r)$;
\item
$l$ is the first letter of $\sigma^p (l)$;
\item
$rl$ belongs to $\L (\sigma )$. 
\end{enumerate}
It is then standard to show that there is an admissible fixed point $x\in A^\ZZ$ of $\sigma^p$ verifying $x_{-1}x_0 = rl$.
Moreover, the sequence $x$ is uniformly recurrent \cite{Queffelec:2010}.
 
We will mainly focus on substitutions with specific arithmetic properties. 
We  recall that an algebraic integer $\beta$ is  a {\em Pisot-Vijayaraghan number} if all its algebraic conjugates have a modulus strictly smaller than $1$.

\begin{defi}\label{def:Pisotsub} Let $\sigma$ be a primitive substitution and let $P_{\sigma}$ denote the characteristic polynomial of the incidence matrix $M_{\sigma}$. We say that the  substitution $\sigma$ is 
\begin{itemize} 
\item of {\em Pisot type} (or {\em Pisot} for short) if  $P_{\sigma}$ has a dominant root $\beta >1$ and any  other root $\beta'$ satisfies $0 < \vert \beta' \vert < 1$;
\item of {\em weakly irreducible Pisot type} (or {\em W. I. Pisot} for short) whenever $P_\sigma$ has a real Pisot-Vijayaraghan number as dominant root, its algebraic conjugates,  with possibly $0$  or roots of the unity as other roots;
\item an {\em irreducible} substitution whenever $P_{\sigma}$ is irreducible over $\QQ$;
\item {\em unimodular} if $\det M_{\sigma} = \pm 1$.
\end{itemize}
\end{defi}

For instance the Fibonacci substitution $0 \mapsto 01, 1 \mapsto 0$ and the Tribonacci substitution $1\mapsto 12, 2 \mapsto 13, 3 \mapsto 1$ are unimodular substitutions of Pisot type. 
Whereas the Thue-Morse substitution $0 \mapsto 01$, $1 \mapsto 10$ is a W. I. Pisot substitution.
A less trivial example of a unimodular W. I. Pisot substitution, given by a modified example of B. Solomyak \cite{Solomyak:2021pers}, is the following.
\begin{exemple}\label{ex:Solomyak}\rm
Consider the  substitution 
$$\gamma \colon 0 \mapsto 111120000, 1 \mapsto 11120 \text{ and } 2 \mapsto 0. $$
The eigenvalues of its incidence matrix are  $(1+ \sqrt{2})^{2}, (1- \sqrt{2})^{2}$ and $1$, so $\gamma$ is a unimodular W.I. Pisot substitution. Unlike the Thue-Morse substitution, it is weakly mixing (see Section \ref{sec:discuss} for a proof). 
Thus the unimodular W. I. Pisot assumption  is not sufficient to ensure the discrete spectrum and even the existence of a rotation factor. 
\end{exemple}

Notice that the notions  of Pisot, W. I. Pisot, irreducible, unimodular  depend only on the  properties of the incidence matrix.  
So starting from a Pisot  (resp. W. I. Pisot, irreducible, unimodular) substitution $\sigma$, we get many examples of  Pisot  (resp. W. I. Pisot, irreducible, unimodular) substitutions by permuting the letters in the letter images of $\sigma$.  

Standard algebraic arguments ensure that a Pisot substitution is an irreducible  substitution, and of course, a Pisot substitution is  of weakly irreducible  Pisot type. 

In the following we will strongly use the fact that for any  substitution $\sigma$  of Pisot type (resp. W. I. Pisot, irreducible, unimodular)  and  for every integer $n \ge 1$, the substitutions $\sigma^n$ are also of Pisot type (resp. W. I. Pisot, irreducible, unimodular).

\subsection{Dynamical systems and subshifts}
A {\em measurable dynamical system} is a quadruple $(X, \B, \mu, S)$, where $(X, \B, \mu)$ is a probability space and $S : X \to X$ is a measurable  map  that preserves the measure $\mu$, {i.e.}, $\mu(S^{-1}B) = \mu (B) $ for any $B \in \B$. This system is called {\em ergodic} if any $S$-invariant measurable set has  measure $0$ or $1$.
Two measurable dynamical systems $(X, \B, \mu, S)$ and $(Y, \B', \nu, T) $  are {\em measure theoretically conjugate} if we can find  invariant subsets $X_{0} \subset X$, $Y_{0} \subset Y$ with $\mu(X_{0}) = \nu(Y_{0}) =1$ and a bimeasurable  bijective map $\psi \colon X_{0} \to Y_{0}$ such that $T \circ \psi = \psi \circ S$ and $\mu(\psi^{-1} B) = \nu(B)$ for any $B \in \B'$.

By a {\em topological dynamical system}, or dynamical system for short, we mean a pair $(X,S)$, where $X$ 
is a compact metric space and $S$ a continuous map from 
$X$ to itself.  It is well-known that such a system endowed with the Borel $\sigma$-algebra admits a probability measure $\mu$ preserved by  the map $S$, and then form a measurable dynamical system. 
If $(X,S)$ admits a unique measure preserved by $S$, then the system is said {\em uniquely ergodic}.

The system $(X,S)$ is {\em minimal} whenever $X$ 
and the empty set are the only $S$-invariant closed subsets of $X$. We 
say that a minimal system $(X,S)$ is {\em periodic} whenever $X$ 
is finite.

A dynamical system $(Y,T)$ is called a {\em factor}  of $(X,S)$ if there is a continuous 
and onto  map $\phi: X \rightarrow Y$ such that $\phi \circ S = T \circ \phi$. The 
map $\phi $ is a {\em factor map}.
The system $(X,S)$ is also said to be an {\em extension} of $(Y,T)$.
If $\phi$ is one-to-one we 
say that $\phi $ is a {\em conjugacy}, and, that $(X,S)$ and
$(Y,T)$ are {\em conjugate}.

A {\em Cantor system} is a dynamical system $(X,S)$, where the space  $ X $ is a 
Cantor space, {i.e.},  $ X $ has a countable basis of its 
topology which consists of closed and open sets and does not have 
isolated points.

\medskip

For a finite alphabet $A$, we endow $A^{\ZZ}$ with the product topology.
A {\em subshift} on $A$ is  a 
pair $(X,S_{\mid X})$, where $X $ is a closed $S$-invariant 
subset of $A^{\ZZ}$ ($S(X) = X$) and $S$ is the {\em shift transformation}
\begin{center} 
\begin{tabular}{lllll}
$S$ & : & $A^{\ZZ}$            & $\rightarrow $ & $A^{\ZZ}$ \\
    &   & $(x_n)_{ n \in \ZZ }$ & $\mapsto$      & $(x_{n+1})_{n\in \ZZ }$.
\end{tabular}
\end{center}
We call {\em language} of $X$ the set $\L (X) = \{ x_{[i,j]} ; x\in X, i\leq j\}$.
A set   defined  with two words  $u$ and $v$  of $A^{*}$ by  
$$
[u.v]_X = \{ x\in X ; x_{[-|u|,|v|-1]} = uv \}
$$
is called a {\em cylinder set}. 
When $u$ is the empty word we set $[u.v]_X = [v]_X$.
The family of cylinder sets is a base 
of the induced topology on $X$. 
When no confusion is possible we write $[u]$ and $S$ instead of $[u]_{X}$ and $S_{\mid X}$.

For  $x\in A^\ZZ$, let $\Omega (x)$ be the set 
$\{ y \in A^{\ZZ} ; y_{[i,j]} \in \L(x), \forall \ [i,j]\subset \ZZ \}$. 
It is clear that $(\Omega (x), S)$ is a subshift, it is called  
the {\em subshift  generated by} $x$.  Notice that  $\Omega (x) = \overline{\{ S^n x ; n\in \ZZ \}}$. 
For a subshift 
$(X,S)$  on $A$, the following are equivalent:

\begin{enumerate}
\item
$(X,S)$ is minimal;
\item
For all $x\in X$ we have $X=\Omega (x)$;
\item
For all $x\in X$ we have $\L(X)=\L(x)$.
\end{enumerate}

We also have that $(\Omega (x), S)$ is minimal if and only if 
$x$ is uniformly recurrent.
Note that if $(Y,S)$ is another  subshift then $\L(X) = \L(Y)$ if and only if $X=Y$.

\subsection{Substitution subshifts}

Let  $\sigma$ be a primitive substitution.
We call {\em substitution subshift generated by $\sigma$}  the topological dynamical system $(\Omega (\sigma ), S)$ where $\Omega (\sigma )$ is the set of sequences $x = (x_n)_{n\in \ZZ}$ such that $\L (x) \subset \L (\sigma )$.
Notice that each power $p$ of $\sigma$ generates the same subshift: $\Omega(\sigma)= \Omega(\sigma^{p})$. 

When $\sigma $ is proper we say $(\Omega (\sigma ), S)$ is a {\em proper substitution subshift}.
If the set $\Omega ({\sigma})$ is not finite, the substitution $\sigma$ is called {\em aperiodic}.

In the literature, substitution subshifts are often defined by a different (but equivalent) method, using fixed points. As observed in \cite{Durand&Host&Skau:1999}, the subshift generated by any admissible fixed point  of a power of $\sigma$ is the substitution subshift generated by $\sigma$. Hence $\Omega(\sigma)$ is not empty. Moreover this subshift is minimal and uniquely ergodic (for more details see \cite{Queffelec:2010}).

In  \cite{Holton&Zamboni:1998} is shown that the subshift generated by a unimodular substitution of Pisot type is aperiodic for the shift, and in \cite{Canterini&Siegel:2001}  that the substitution  has to be primitive.
Thus the generated subshift is a  aperiodic minimal Cantor system.

\subsection{Dynamical spectrum of substitution subshifts}

For a measurable dynamical system $(X, \B, \mu, T)$, a  complex
number $\lambda$ is a (dynamical)  {\it eigenvalue} of the dynamical system
$(X,\B, \mu, T)$ with respect to $\mu$ if there exists $f\in L^2(X,\mu)$, $f\not = 0$, such that $f\circ T = \lambda f$; $f$
is called an {\it eigenfunction} (associated with $\lambda$). The value  $1$ is the {\em trivial eigenvalue} associated with a constant eigenfunction. 

If the system is ergodic, then every eigenvalue is of modulus 1, and, every eigenfunction has a constant modulus $\mu$-almost surely.
The collection of eigenvalues is called the {\em spectrum}  of the system, and form a multiplicative subgroup of the circle $\bS =\{ z \in \CC; \ \vert z\vert  =1 \}$. 

For a topological dynamical system, if the eigenfunction $f$ is continuous, the complex number $\lambda$ is called a {\em continuous eigenvalue}. 

An important result for the spectral theory of substitution subshifts is due to B. Host \cite{Host:1986}. It states  that any eigenvalue of a substitution subshift is a continuous eigenvalue. 
 Notice that Host's result was stated for primitive substitutions that are injective on letters but the proof only needed the recognizabilty of the substitution.
From \cite{Mosse:1992} it is known that all primitive substitutions are recognizable whenever its subshift is aperiodic. 
Hence the result stands for all primitive substitutions.

The following proposition, claimed in \cite{Host:1992} (see Proposition 7.3.29 in \cite{Fogg:2002} for a proof), shows that  the spectrum of a unimodular substitution subshift of Pisot type is not trivial.

\begin{prop} \label{prop:vpPisot} 
Let $\sigma$ be  a unimodular substitution of Pisot type and $\alpha$ be a frequency of a letter in any infinite word of $\Omega ({\sigma})$.
Then ${\rm exp}(2i\pi \alpha)$ is a continuous eigenvalue of the dynamical system $(\Omega ({\sigma}), S)$.
\end{prop}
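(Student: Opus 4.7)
The plan is to exhibit a continuous eigenfunction $f: \Omega_\sigma \to \bS$ satisfying $f(Sx) = \exp(2i\pi\alpha) f(x)$, by combining an arithmetic estimate derived from the Pisot property with an assembly of $f$ using Kakutani-Rokhlin towers induced by iterates of $\sigma$.

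The key step is the following arithmetic lemma: for every letter $c \in A$, the quantity $\alpha\,|\sigma^n(c)|$ tends to $0$ modulo $1$ exponentially fast as $n \to \infty$. Unique ergodicity of the primitive system $(\Omega_\sigma, S)$ combined with Perron-Frobenius identifies $\alpha$ with $v_a$, where $v$ is the right Perron eigenvector of $M_\sigma$ normalised by $\mathbf{1}^\top v = 1$ (so the entries of $v$ are exactly the letter frequencies). Writing $e_a$ for the standard basis vector and $\mathbf{1}$ for the all-ones vector, a direct computation gives
$$
|\sigma^n(c)|_a - \alpha\,|\sigma^n(c)| = (e_a - \alpha\,\mathbf{1})^\top\, M_\sigma^n\, e_c,
$$
where $|\sigma^n(c)|_a$ denotes the number of occurrences of $a$ in $\sigma^n(c)$. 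The covector $e_a - \alpha\,\mathbf{1}$ annihilates $v$ since $v_a = \alpha$ and $\mathbf{1}^\top v = 1$, so in the spectral decomposition of $M_\sigma^n$ the dominant rank-one $\beta^n$-term drops out of the pairing. The remaining contributions involve only the non-dominant eigenvalues of $M_\sigma$, all of modulus strictly less than $1$ by the Pisot hypothesis; hence the right-hand side is $O(\rho^n)$ for some $\rho \in (0,1)$. Because $|\sigma^n(c)|_a \in \ZZ$, the real number $\alpha\,|\sigma^n(c)|$ is exponentially close to an integer.

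Next I would use this arithmetic estimate to produce $f$ via the Kakutani-Rokhlin partitions $\mathcal{P}_n = \{S^k \sigma^n([c]) : c \in A,\ 0 \le k < |\sigma^n(c)|\}$, which refine as $n \to \infty$ to generate the topology of $\Omega_\sigma$. On the atom $S^k \sigma^n([c])$ define $f_n(x) = \zeta_c^{(n)}\,\exp(2i\pi k\alpha)$ for a suitable base phase $\zeta_c^{(n)} \in \bS$ attached to each letter $c$. The compatibility of $f_{n+1}$ with $f_n$ on finer atoms reduces to the quantities $\alpha\,|\sigma^n(c')| \pmod 1$ for various $c'$, which by the arithmetic lemma are exponentially small. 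Recursively synchronising the base phases $\zeta_c^{(n)}$ makes $(f_n)$ a uniformly convergent sequence; its limit $f$ is continuous and satisfies $f \circ S = \exp(2i\pi\alpha)\,f$, showing that $\exp(2i\pi\alpha)$ is a continuous eigenvalue.

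The delicate point is Step 2: one must match the base phases $\zeta_c^{(n)}$ consistently across infinitely many levels and control the behaviour at the boundaries of the towers, where the cocycle identity $f_n(Sx) = \exp(2i\pi\alpha)\,f_n(x)$ fails on a thin set. The exponential decay from the arithmetic lemma is exactly what makes the correction series summable and guarantees a continuous limit; alternatively, one may first obtain a measurable eigenfunction from Step 1 and then invoke Host's theorem, cited earlier, to upgrade the eigenvalue from measurable to continuous.
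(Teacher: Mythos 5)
The paper does not prove this proposition itself --- it is quoted from Host's unpublished manuscript, with the proof deferred to Proposition 7.3.29 of \cite{Fogg:2002} --- so there is no internal proof to compare against; what you have written is essentially the standard argument from that reference, and it is correct in its essentials. Your arithmetic lemma is complete and right: since $\alpha=v_a$ for the right Perron eigenvector $v$ normalised by $\mathbf{1}^{\top}v=1$, the covector $e_a-\alpha\mathbf{1}$ kills the dominant spectral projector $vu^{\top}/(u^{\top}v)$ of $M_\sigma^n$, and (Pisot type forcing $P_\sigma$ irreducible, hence $M_\sigma$ diagonalisable) the remainder is $O(\rho^n)$ with $\rho=\max|\beta'|<1$; note in passing that unimodularity plays no role here. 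For the second step, the cleanest completion is exactly the one you name in your last sentence and is already recorded in the paper as Proposition \ref{prop:condcont}: the estimate gives $\sum_n\max_{c}|\lambda^{|\sigma^n(c)|}-1|<\infty$, so $(\lambda^{-r_n})_n$ converges uniformly to a continuous eigenfunction (Host's criterion). Two points of care if you insist on the hand-built tower construction: the partition $\{S^{-k}\sigma^n([c])\}$ is a genuine Kakutani--Rohlin partition for a general (non-proper) primitive aperiodic substitution only by Moss\'e's recognizability theorem, which you should invoke explicitly since the paper's Proposition \ref{substKR} covers only the proper case; and the phase-synchronisation you describe is precisely what the entrance-time cocycle $r_n$ automates, so it is better to phrase the construction as $f=\lim\lambda^{-r_n}$ rather than to resynchronise base phases by hand. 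With either of those repairs the proof is complete.
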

Recall that these frequencies are the coordinates of the right normalized eigenvector associated with the dominant eigenvalue of the incidence matrix of the substitution \cite{Queffelec:2010}, and, moreover, for a unimodular  Pisot substitution they  are multiplicatively independent (Proposition 3.1 in \cite{Canterini&Siegel:2001}). 

\subsection{Domain exchange}\label{subsec:domex}
As we did not find in the litterature a formal definition of domain exchange, we propose below  a definition that fits  for the standard properties satisfied by the interval exchange transformations and the Rauzy fractal. 
Let us recall that a subset of a topological space is said {\em regular} if it equals the closure of its interior.

\begin{defi}\label{def:domainexchange}
We call  {\em domain exchange transformation}  a measurable dynamical system $(E, \B, \lambda_E, T)$,
where  $E$ is a regular compact subset of an Euclidean space, $\lambda_E$ denotes the normalized Lebesgue measure on $E$  and $\mathcal B$ denotes the Borel $\sigma$-algebra, such that: 
\begin{itemize}
\item There exist compact regular subsets $E_{1}$, $\ldots,$ $E_{n}$ such that $ E =E_{1}\cup \cdots \cup E_{n}$;  
\item The sets $E_{i}$ are disjoint in measure:
$$\lambda_E (E_{i} \cap E_{j}) = 0 \hspace{0.5cm} \textrm{ when } i\neq j;$$
\item $\lambda (T(E)) = \lambda (E)$; 
\item   For any $i$, $T$  restricted to $E_{i}$ is a translation. 
\end{itemize}
\end{defi}

Using standard arguments in measure theory (such as translation invariance of the Lebesgue measure), we leave it to the reader to verify  
that for a domain exchange transformation,  the map  $T$ is one-to-one except on a set of measure zero, and the map $T^{-1}$  is measurable.

A compact set $E$ is said {\em self-affine} (by piecewise self-affine maps) when there are finitely many  non surjective piecewise affine maps $f_{1}, \ldots f_{\ell} \colon E \to E$ with a common linear part, such that 
the sets $f_{i}(E)$ are disjoint in measure and 
$ E = \bigcup_{i=1}^{n} f_{i}(E)$.  
We say a domain exchange $(E, \B, \lambda_E, T)$ is {\em self-affine} when $E$ is.

\section{Matrix eigenvalues and return substitutions}\label{sec:returnsub}

In this section we recall the notion of return substitution introduced in  \cite{Durand:1998a} and that any  substitution subshift is conjugate to an explicit primitive and proper substitution subshift without changing too much the eigenvalues of the associated substitution matrix \cite{Durand:1998b}.

\medskip 

Let $A$ be an alphabet, $x$ be an element of $A^\ZZ$ and  $u$ be a word of $x$. 
We call {\em return word} to $u$ of $x$ every factor $x_{[i,j-1]}$, where $i$ and $j$ are two successive occurrences of $u$ in $x$. We denote by $\R_{x,u}$ the set of return words to $u$ of $x$.    
Notice that for a return word $v$, $vu$  belongs to $\L(x)$ and  $u$ is a prefix of the word $vu$. 
Suppose $x$ is uniformly recurrent.
It is easy to check that for any  word $u$ of $x$, the set $\R_{x,u}$ is finite.  
Moreover, for any sequence $y  \in \Omega (x)$, we have $\R_{y,u} =\R_{x,u}$. 
The sequence $x$ can be written naturally as a concatenation

$$ 
x =\cdots  w_{-1}w_{0} w_{1} \cdots, \hspace{1cm} w_{i} \in \R_{x, u}, \  i\in \ZZ,
$$ 
of return words to $u$, and this decomposition is unique. 
By enumerating the elements of $\R_{x, u}$ in the order of their first appearence in $(w_{i})_{i\ge 0}$, we get a bijective map 
$$ 
\Theta_{x, u} \colon R_{x, u} \to \R_{x, u} \subset A^* ,
$$
where $R_{x, u} =\{1, \ldots,  \textrm {Card } (\R_{x, u}  )\}$.
This map defines a morphism from $R_{x, u}^*$ to $A^*$. 
When $u$ is a prefix of $x_{[0,+\infty )}$ we denote by $ D_{u}(x)$ the unique sequence on the alphabet $R_{x,u}$ characterized by 
$$ 
\Theta_{x,u} (D_{u}(x)) = x.
$$

We call it the {\em derived sequence of} $x$ on $u$. 

A finite subset $\R \subset A^+$ is a {\em code} if  every word $u \in A^+$ admits at most one decomposition as a concatenation  of elements of $\R$. 

We say that a code $\R$ is a {\em circular code} if for any words
$$  
w_{1}, \ldots, w_{j}, w, w'_{1}, \ldots, w'_{k} \in \R; s\in A^+ \textrm{  and } t \in A^*
$$
such that
$$
w=ts \textrm{ and } w_{1}\cdots w_{j} = sw'_{1} \cdots w'_{k}t
$$
then $t$ is the empty word. It follows that $j=k+1$, $w_{i+1} =w'_{i'}$ for $1 \le i \le k$ and $w_{1}=s$. 

\begin{prop}[\cite{Durand&Host&Skau:1999} Lemma 17]
\label{prop:circular}
Let $x\in A^\ZZ$ be a uniformly recurrent sequence and $u$  a prefix of $x_{[0, +\infty)}$. 
The set $\R_{x,u}$ is a circular code.
\end{prop}

The next four propositions are usually stated for one-sided sequences,
but they are still true for uniformly recurrent $x\in A^\ZZ$ as these statements only depend on $x^+ = x_{[0,+\infty )}$.
Indeed,  $x^+$ is also a uniformly recurrent element of $A^\NN$.
We can define in the same way the derived sequence $D_u (x^+)$, the sets $\R_{x^+,u}$ and $R_{x^+,u}$, and, the map $\Theta_{x^+,u}$ when $u$ is a prefix of $x^+$.
Moreover we clearly have: 
$$
D_u (x^+) = D_u (x)_{[0,+\infty )},  \R_{x^+,u} = \R_{x,u},  R_{x^+,u}= R_{x,u} \hbox{ and } \Theta_{x^+,u} = \Theta_{x,u} .
$$

The following proposition  enables to associate to a substitution  another substitution on the alphabet $R_{x,u}$. 
We recall that admissible fixed points of primitive substitutions are uniformly recurrent. 
So the return word notion are meaningful.

\begin{prop}[\cite{Durand:1998a} proof of Proposition 5.1]
\label{subst_retour} 
Let $x\in A^\ZZ$ be an admissible fixed point of the primitive substitution  $\sigma$ and $u$ be a nonempty prefix of $x_{[0,+\infty )}$. 
There exists a primitive substitution $\sigma_{u}$, defined on the alphabet $R_{x,u}$, characterized by
$$
\Theta_{x,u} \circ  \sigma_{u} = \sigma \circ  \Theta_{x,u}.
$$
\end{prop}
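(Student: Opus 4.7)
The plan is to exhibit $\sigma_{u}$ concretely by letting $\sigma$ act on the canonical return-word decomposition of $x$. A substitution in the paper's sense is necessarily non-erasing (otherwise $|\sigma^{n}(b)|$ would not tend to infinity for some $b$), so $|\sigma(w)|\geq |w|$ for every word $w$. The key preliminary observation is that $u$ is a prefix of $\sigma(u)$: indeed, $\sigma(u)$ is a prefix of $\sigma(x)=x$, so $\sigma(u)$ and $u$ are both prefixes of $x$, and the inequality $|\sigma(u)|\geq|u|$ forces $u$ to be a prefix of $\sigma(u)$. From this I deduce the following stability of occurrences: for every occurrence $p$ of $u$ in $x$, the position $q=|\sigma(x_{[0,p-1]})|$ is again an occurrence of $u$ in $\sigma(x)=x$, because the suffix of $x$ read from position $q$ begins with $\sigma(u)$, hence with $u$.

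Write the canonical decomposition $x=m_{0}m_{1}m_{2}\cdots$ with $m_{i}\in \R_{x,u}$ and $m_{0}=\Theta_{x,u}(1)$. The boundary positions separating consecutive $m_{i}$ are exactly the occurrences of $u$ in $x$. Applying the stability observation to a single return word $\Theta_{x,u}(i)$, its image $\sigma(\Theta_{x,u}(i))$ starts at an occurrence of $u$ and ends just before another, so it factors as a concatenation of elements of $\R_{x,u}$; by the circular code property of Proposition~\ref{mdretour}(1) this factorisation is unique. I define $\sigma_{u}(i)\in R_{x,u}^{*}$ to be the unique word satisfying $\Theta_{x,u}(\sigma_{u}(i))=\sigma(\Theta_{x,u}(i))$, so that $\Theta_{x,u}\circ \sigma_{u}=\sigma\circ \Theta_{x,u}$ by construction. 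To check that $\sigma_{u}$ is a substitution in the paper's sense: it is non-erasing (since $|\sigma(\Theta_{x,u}(i))|\geq|\Theta_{x,u}(i)|\geq 1$); $|\sigma_{u}^{n}(i)|\to \infty$, since $|\sigma^{n}(\Theta_{x,u}(i))|\to \infty$ and return words have bounded length; and $\sigma_{u}(1)$ starts with $1$, because $\sigma(m_{0})$ is a prefix of $x$ of length at least $|m_{0}|$ and hence begins with $m_{0}=\Theta_{x,u}(1)$.

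It remains to prove primitivity of $\sigma_{u}$. For this I would combine the primitivity of $\sigma$ with the uniform recurrence of $x$: for $n$ sufficiently large, $\sigma^{n}(\Theta_{x,u}(i))$ is a factor of $x$ whose length exceeds the uniform-recurrence constant associated with every return word, so every $\Theta_{x,u}(j)$ appears as a factor of $\sigma^{n}(\Theta_{x,u}(i))$. Since any occurrence of a return word in $x$ necessarily begins at an occurrence of $u$ (the common prefix of all return words), and every occurrence of $u$ in $x$ is a boundary of the return-word decomposition, each such factor occurrence forces the letter $j$ to appear in the decomposition $\sigma_{u}^{n}(i)$. Hence the incidence matrix of $\sigma_{u}^{n}$ has only strictly positive entries, giving primitivity. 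The main technical obstacle throughout is the verification that occurrences of return words within images $\sigma(\Theta_{x,u}(i))$ really do align with the canonical decomposition of $x$; this rests entirely on the circular code property of $\R_{x,u}$, but once the alignment is set up both the construction and the primitivity are a matter of careful bookkeeping.
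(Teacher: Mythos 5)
Your construction of $\sigma_{u}$ and the verification of the commutation relation $\Theta_{x,u}\circ\sigma_{u}=\sigma\circ\Theta_{x,u}$ are correct and follow the standard route (the paper does not prove this proposition but quotes it from \cite{Durand:1998a}, where essentially this argument appears): $u$ is a prefix of $\sigma(u)$, hence $\sigma$ maps occurrences of $u$ to occurrences of $u$, so $\sigma(\Theta_{x,u}(i))$ sits between two occurrences of $u$ in $x$ and decomposes uniquely over the code $\R_{x,u}$; injectivity of $\Theta_{x,u}$ on $R_{x,u}^{*}$ then gives the ``characterized by'' part, and your checks that $\sigma_{u}$ is a substitution in the paper's sense (non-erasing, lengths tending to infinity, $\sigma_{u}(1)$ beginning with $1$) are fine.

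The primitivity argument, however, has a genuine gap. First, $u$ is \emph{not} in general a common prefix of all return words: a return word $v$ only satisfies that $u$ is a prefix of $vu$, and $v$ may be shorter than $u$. Second, and more seriously, even when $u$ is a prefix of $\Theta_{x,u}(j)$, an occurrence of the \emph{word} $\Theta_{x,u}(j)$ at a boundary position $p_{k}$ of the decomposition does not force $m_{k}=\Theta_{x,u}(j)$: it only forces $m_{k}$ to have $\Theta_{x,u}(j)$ as a prefix, and one return word can be a proper prefix of another (with $u=a$, a sequence containing both $aba$ and $abca$ has return words $ab$ and $abc$). So the step ``each such factor occurrence forces the letter $j$ to appear in the decomposition $\sigma_{u}^{n}(i)$'' does not follow as written. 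The standard repair is to apply uniform recurrence to $W_{j}=\Theta_{x,u}(j)u$ rather than to $\Theta_{x,u}(j)$: since the defining occurrence of $\Theta_{x,u}(j)$ lies between \emph{consecutive} occurrences of $u$, the word $W_{j}$ contains $u$ exactly at offsets $0$ and $|\Theta_{x,u}(j)|$, so every occurrence of $W_{j}$ in $x$ starts at a boundary $p_{k}$ with $p_{k+1}=p_{k}+|\Theta_{x,u}(j)|$, whence $m_{k}=\Theta_{x,u}(j)$ and the letter $j$ genuinely appears in the derived sequence there. With this substitution your length estimate on $\sigma^{n}(\Theta_{x,u}(i))$ does yield a power of $\sigma_{u}$ with strictly positive incidence matrix, completing the proof.
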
     
For a prefix $u$ of $x_{[0,+\infty )}$, where $x \in A^\ZZ$ is an admissible fixed point of a primitive substitution $\sigma $, the derived sequence $D_{u}(x)$ is an admissible fixed point of the substitution $\sigma_{u}$. 
Indeed, we have
\begin{align*}
\Theta_{x,u}  \circ \sigma_{u} (D_{u}(x)) = \sigma \circ \Theta_{x,u}( D_{u}(x) ) = \sigma (x) = x = \Theta_{x,u}  \circ  D_{u}(x).
\end{align*}
Then, the unicity of the concatenation into return words implies  $D_{u}(x)$ is a fixed point of $\sigma_{u}$.
It is clearly admissible as it is uniformly recurrent.

This substitution, defined in the previous proposition, is called the {\em return substitution} ({\em to } $u$). 
Moreover, we observe that for any integer $l >0$
$$
(\sigma^l)_{u}  = (\sigma_{u})^l.
$$
 
Furthermore the  incidence matrix of the return substitution has almost the same spectrum as the initial substitution. More precisely, we have:
\begin{prop}[\cite{Durand:1998b} Proposition 9]
\label{subst_retourvp}
Let $\sigma$ be a primitive substitution and let $u$ be a prefix of $x_{[0,+\infty )}$ where $x\in A^\ZZ$ is an admissible fixed point of $\sigma $. 
The incidence  matrices $M_{\sigma }$ and $M_{\sigma_{u}}$ have the same eigenvalues, except perhaps zero and  roots of the unity. 
\end{prop}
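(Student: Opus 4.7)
The plan is to abelianise the substitution identity $\Theta_{x,u}\circ\sigma_u=\sigma\circ\Theta_{x,u}$ from Proposition~\ref{subst_retour} into the linear intertwining
\[
P\,M_{\sigma_u}\;=\;M_\sigma\,P,
\]
where $P\in\ZZ^{|A|\times|R_{x,u}|}$ is the incidence matrix of the morphism $\Theta_{x,u}$: its $r$-th column is the Parikh vector of the return word $\Theta_{x,u}(r)\in A^+$. Immediately $\operatorname{Im}(P)\subset\QQ^{|A|}$ is $M_\sigma$-stable and $\ker(P)\subset\QQ^{|R_{x,u}|}$ is $M_{\sigma_u}$-stable, and $P$ induces a $\QQ$-linear conjugacy between $M_{\sigma_u}$ acting on $\QQ^{|R_{x,u}|}/\ker(P)$ and $M_\sigma$ acting on $\operatorname{Im}(P)$. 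Therefore these ``common block'' spectra agree, and the result reduces to showing that the extra eigenvalues of $M_\sigma$ on the cokernel $\QQ^{|A|}/\operatorname{Im}(P)$ and of $M_{\sigma_u}$ on $\ker(P)$ all lie in $\{0\}\cup\mu_\infty$, where $\mu_\infty$ denotes the set of roots of unity.

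For the cokernel I would exploit primitivity: for $n$ large enough, each $\sigma^n(a)$ contains $u$, so one can decompose
\[
\sigma^n(a)\;=\;p_n(a)\cdot w_n(a)\cdot s_n(a),
\]
where $w_n(a)$ is the portion between the first and last occurrences of $u$ in $\sigma^n(a)$---a concatenation of return words whose Parikh vector therefore lies in $\operatorname{Im}(P)$---while $p_n(a)$ and $s_n(a)$ are the prefix up to the first $u$ and the suffix from the last $u$. Uniform recurrence of $x$ at $u$ bounds $|p_n(a)|$ and $|s_n(a)|$ independently of $n$, and since $n\mapsto\sigma^n(a)_0$ and $n\mapsto\sigma^n(a)_{|\sigma^n(a)|-1}$ are iterations of finite-state maps on $A$, both the prefix and the suffix are eventually periodic in $n$ with a common period $T\ge 1$. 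Consequently the sequence of classes $M_\sigma^n\bar e_a\in\QQ^{|A|}/\operatorname{Im}(P)$ is eventually $T$-periodic, so the induced operator $\overline{M_\sigma}$ satisfies $\overline{M_\sigma}^N(\overline{M_\sigma}^T-I)=0$ on the cokernel for some $N$; its minimal polynomial therefore divides $X^N(X^T-1)$, which forces its eigenvalues into $\{0\}\cup\{T\text{-th roots of unity}\}$.

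The kernel side is the main obstacle. My plan is to transpose the intertwining, $M_{\sigma_u}^T\,P^T=P^T\,M_\sigma^T$, and use the identification $\ker(P)=\operatorname{Im}(P^T)^\perp$ under the standard pairing so that the eigenvalues of $M_{\sigma_u}|_{\ker(P)}$ coincide with those of $M_{\sigma_u}^T$ on the quotient $\QQ^{|R_{x,u}|}/\operatorname{Im}(P^T)$. I would then apply the same prefix/suffix stabilisation mechanism, but now to the primitive substitution $\sigma_u$ on $R_{x,u}$ (with fixed point $D_u(x)$): using Proposition~\ref{mdretour}(3) to identify iterated derivations via $D_v(D_u(x))=D_w(x)$ with $w=\Theta_{x,u}(v)u$, and mimicking the decomposition of $\sigma_u^n(r)$ into return words of $D_u(x)$, one obtains that the $M_{\sigma_u}^{T\,n}$-orbits in $\QQ^{|R_{x,u}|}/\operatorname{Im}(P^T)$ are eventually periodic, yielding the same conclusion. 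The delicate point---and the reason this step is the main obstacle---is faithfully matching the combinatorial ``eventually periodic prefix/suffix'' input, which naturally controls cokernels of return-substitution incidence matrices, with the algebraically defined quotient by $\operatorname{Im}(P^T)$. A cleaner alternative finish would be to invoke Kronecker's theorem: since $M_{\sigma_u}|_{\ker(P)}$ is an integer matrix (after choosing an integer basis of $\ker(P)$), it suffices to bound its spectral radius by $1$, which one can attempt using the fact that the Perron eigenvector of $M_{\sigma_u}$ does not lie in $\ker(P)$ together with growth estimates for iterates on the remaining invariant directions.
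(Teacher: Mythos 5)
The paper itself offers no proof of this proposition: it is imported wholesale from \cite{Durand:1998b}, so there is no in-paper argument to compare yours against and it has to be judged on its own terms. Your reduction is the natural one: abelianising $\Theta_{x,u}\circ\sigma_u=\sigma\circ\Theta_{x,u}$ into $PM_{\sigma_u}=M_\sigma P$, identifying the spectrum of $M_{\sigma_u}$ on $\QQ^{|R_{x,u}|}/\ker P$ with that of $M_\sigma$ on $\operatorname{Im}P$, and handling the cokernel by writing $\sigma^n(a)=p_n(a)\,w_n(a)\,s_n(a)$ with $w_n(a)$ a concatenation of return words and $p_n(a)$, $s_n(a)$ of bounded length and eventually periodic in $n$. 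That half is sound and yields the inclusion $\operatorname{spec}(M_\sigma)\subset\operatorname{spec}(M_{\sigma_u})\cup\{0\}\cup\{\text{roots of unity}\}$.

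The genuine gap is the converse inclusion, i.e.\ the eigenvalues of $M_{\sigma_u}$ on $\ker P$, and neither of your proposed exits closes it. Transposing moves you to the quotient of $\QQ^{|R_{x,u}|}$ by the \emph{row} space of $P$, whose generators are the letter-counting functionals $r\mapsto|\Theta_{x,u}(r)|_a$; the prefix/suffix stabilisation mechanism says nothing about that subspace. What that mechanism does give, when applied to the primitive substitution $\sigma_u$ and a prefix $v$ of $D_u(x)$ via Proposition \ref{mdretour}(3), is control of the cokernel of the incidence matrix of $\Theta_{D_u(x),v}$, i.e.\ the inclusion $\operatorname{spec}(M_{\sigma_u})\subset\operatorname{spec}(M_{\sigma_w})\cup\{0\}\cup\{\text{roots of unity}\}$ with $w=\Theta_{x,u}(v)u$ --- which pushes the problem one level further down the chain of derived substitutions instead of back up to $M_\sigma$. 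The Kronecker alternative requires the spectral radius of $M_{\sigma_u}|_{\ker P}$ to be at most $1$; you give no proof, and it is not even clearly the right target, since the proposition only needs the $\ker P$-eigenvalues to lie in $\{0\}\cup\{\text{roots of unity}\}\cup\operatorname{spec}(M_\sigma)$, and when $M_\sigma$ has several eigenvalues of modulus greater than $1$ nothing in your argument prevents $\ker P$ from carrying copies of them: growth estimates on the non-Perron invariant directions of a primitive matrix only bound them by the Perron value. Closing this direction requires an honest reverse intertwining --- for instance a decomposition of the blocks $\sigma^{n}(a)$ of $x$ into complete return words whose junction terms are controlled (say through the two-block presentation of $\sigma$) --- and that is exactly the content one has to take from \cite{Durand:1998b}.
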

For instance, for the Tribonacci substitution $\tau$, the return substitution $\tau_{1}$ is the same as $\tau$. On the other hand, if we consider the substitution 
$$\sigma \colon 1 \mapsto  1123,  2\mapsto  211, \textrm{ and } 3 \mapsto 21,$$ it is also a substitution of Pisot type and the incidence matrix of the return substitution $\sigma_{11}$ has $0$ as eigenvalue.   
Indeed, the characteristic polynomial of $\sigma $ is $X^3 - 3X^2 - X - 1$ and one has 
$$\Theta_{x,11} \colon 1 \mapsto  1123,  2\mapsto  11232, 3 \mapsto 122, 4 \mapsto 1 \textrm{ and } 5 \mapsto 11212,$$
$$\sigma_{11} \colon 1 \mapsto  1234,  2\mapsto  12544, 3 \mapsto 1244, 4 \mapsto 1  \textrm{ and } 5 \mapsto 1244244,$$
with the characteristic polynomial of $\sigma_{11}$ being $X(X + 1) (X^3 - 3X^2 - X - 1)$.

The next proposition is a key statement in the proof of our main result Theorem \ref{theo:main}.
It allows to work with a proper substitution.

\begin{prop}
\label{expansion} 
Let $y = (y_{i})_{i\in \ZZ}$ be an admissible fixed point of a  primitive substitution $\tau$ on the alphabet $R$. Let $\Theta : R^* \to A^*$ be a non-erasing morphism, $x= \Theta(y)$ and $(X,S)$ be the subshift generated   by $x$.

Then, there exist a primitive substitution $\xi$ on an alphabet $B$, an admissible fixed point $z$ of $\xi$, and a map $ \phi : B \to A$  such that:
\begin{enumerate}
\item 
\label{item:substitutive}
$\phi(z) = x$;
\item 
\label{item:circularcode}
If $\Theta(R)$ is a circular code, then $\phi$ is a conjugacy from $(\Omega ({\xi}) , S)$ to $(X,S)$;
\item 
\label{item:proper}
If $\tau$ is proper (resp. right or left proper), then $\xi$ is proper (resp. right or left proper);
\item
\label{item:equal power}
There exists a prefix $u \in B^+$  of $z_{[0,+\infty )}$ such that $R_{y,y_{0}} = R_{z,u}$ and there is an integer $l \ge 1$  such that  the return substitutions $\tau_{y_{0}}^l$ and $\xi_{u}$ are the same. 
\end{enumerate} %
\end{prop} 
Actually  the first three statements of this proposition  correspond to Proposition 23 in \cite{Durand&Host&Skau:1999}. The substitution $\xi$ is explicit in the proof. 

\begin{proof} The statements \eqref{item:substitutive}, \eqref{item:circularcode}, \eqref{item:proper}, and the fact that $\xi$ is primitive, have been proven in \cite[Proposition 23]{Durand&Host&Skau:1999}.
We will just give the proof of the first statement because we need it to prove the fourth statement. 

Considering a power of $\tau$ instead of $\tau $ if needed, we can assume that   $\vert \tau(j) \vert \ge  \vert \Theta (j) \vert   $ for any $j \in R$. 
For all $j \in R$, let us denote $m_{j} = \vert \tau(j) \vert$ and $n_{j} = \vert \Theta(j) \vert$. 
We define 
\begin{itemize}
\item An alphabet $B := \{ (j, p) ;  j\in R, 1 \le p \le n_{j} \}$; 
\item A morphism $\phi \colon B^* \to A^*$ by $\phi(j,p) = (\Theta( j))_{p}$;
\item A morphism $\psi \colon R^* \to B^*$ by $\psi(j) =(j,1)(j,2) \cdots (j, n_{j})$.
\end{itemize}

Clearly, we have  $\phi \circ \psi = \Theta$. We define a substitution $\xi$ on $B$ by
\begin{eqnarray*}
\forall j \in R, \ 1 \le p \le n_{j} ; \ \xi(j,p )= \begin{cases} \psi((\tau(j))_{p})  & \textrm{ if } 1 \le p < n_{j}\\ \psi((\tau(j))_{[n_{j},m_{j}]}) & \textrm{ if } p= n_{j}. \end{cases} 
\end{eqnarray*}

Thus  for every $j \in R$, we have  $\xi(\psi(j)) = \xi(j,1)\cdots \xi(j, n_{j}) = \psi(\tau(j))$, {i.e.},

\begin{equation}
\label{eq:returnsub}
\xi \circ \psi = \psi \circ \tau.
\end{equation}

For $z = \psi(y)$ we obtain $\xi(z) = \psi(\tau(y)) = \psi(y) = z$, that is  $z$ is a fixed point of $\xi$.  
Moreover, $\phi(z) = \phi(\psi(y)) = \Theta(y) =x$ and we get the point (1). 
   
Let us prove the fourth statement.

Let $u =\psi(y_{0}) \in B^*$.  First, notice the morphism  $\psi$ is one-to-one and then we have $\psi(\R_{y, y_{0}}) = \R_{\psi(y), \psi(y_{0})}$. It follows that 
$$ R_{y,y_{0}} = R_{\psi(y), \psi(y_{0})} = R_{z,u},$$
and 
$$
\psi \circ \Theta_{y,y_{0}} = \Theta_{\psi(y), \psi(y_{0})} = \Theta_{z,u}.
$$

Therefore for the  return substitution  $\tau_{y_{0}}$ to $y_{0}$,  Proposition \ref{subst_retour} and Relation (\ref{eq:returnsub})  give 
$$
\Theta_{z, u} \circ  \tau_{y_{0}} = \psi  \circ  \Theta_{y,y_{0}}  \circ \tau_{y_{0}}
= \psi  \circ  \tau \circ  \Theta_{y,y_{0}} 
= \xi \circ  \psi \circ  \Theta_{y,y_{0}}
=\xi \circ  \Theta_{z, u}.
$$

Consequently, we have $ \tau_{y_{0}} = \xi_{u}$.
\end{proof}

As a corollary of propositions \ref{prop:circular}, \ref{subst_retour}, \ref{subst_retourvp} and \ref{expansion}, we get
 
\begin{coro}
\label{coro:wPisot}  
Let $\sigma $ be a  primitive substitution. 
Then there exists a proper primitive substitution $\xi$ on an alphabet $B$, such that 

\begin{enumerate} 
\item 
$(\Omega ({\sigma}), S)$ is conjugate to $(\Omega ({\xi}), S)$;
\item 
there exists $l\geq 1$ such that the substitution matrices $M_{\sigma}^l$ and $M_{\xi}$ have the same eigenvalues, except perhaps 0 and 1.
\end{enumerate}
\end{coro}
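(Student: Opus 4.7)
The plan is to realize $(\Omega_\sigma, S)$ as the subshift of a proper primitive substitution via the return word coding, and then to track matrix eigenvalues through the construction by iterated application of Proposition~\ref{subst_retourvp}.

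First I would replace $\sigma$ by a suitable power $\sigma^N$, so as to dispose of an admissible two-sided fixed point $x \in A^\ZZ$: by primitivity, one can choose letters $a, b$ with $\sigma^N(a)$ beginning in $a$, $\sigma^N(b)$ ending in $b$, and $ba \in \L(\sigma)$, and then set $x = \sigma^{N\infty}(b) \cdot \sigma^{N\infty}(a)$. Since $\Omega_{\sigma^N} = \Omega_\sigma$ and the corollary tolerates taking powers, this reduction is harmless. Next I would select a prefix $u$ of $x$ for which the return substitution $\sigma^N_u$ on $R_{x,u}$ provided by Proposition~\ref{subst_retour} is proper. Since every return word begins with $u_0 = x_0$, choosing $u$ long enough ensures that the first return word appearing in the decomposition of $\sigma^N(w)$ is determined by a common prefix of $\sigma^N(u)$ and is therefore independent of $w \in \R_{x,u}$; a symmetric argument on the right yields right properness.

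With $\sigma^N_u$ proper, I would apply Proposition~\ref{expansion} with $\sigma^N_u$ in the role of the proper substitution, fixed point $y = D_u(x)$, and coding $\Theta = \Theta_{x,u}$. By Proposition~\ref{mdretour}(1) the image $\Theta(R_{x,u}) = \R_{x,u}$ is a circular code, so parts (2) and (3) produce a proper primitive substitution $\xi$ on some alphabet $B$ such that the factor map $\phi$ is a conjugacy from $(\Omega_\xi, S)$ onto the subshift generated by $x$, namely $(\Omega_\sigma, S)$; this establishes~(1). Moreover, Proposition~\ref{expansion}(4) supplies a prefix $u' \in B^+$ of a fixed point of $\xi$ and an integer $l' \geq 1$ such that $(\sigma^N_u)_{y_0}^{\,l'} = \xi_{u'}$.

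For~(2), three applications of Proposition~\ref{subst_retourvp} give that the pairs $(M_\sigma^N, M_{\sigma^N_u})$, $(M_{\sigma^N_u}, M_{(\sigma^N_u)_{y_0}})$, and $(M_\xi, M_{\xi_{u'}})$ share eigenvalues up to $0$ and roots of unity. Combining these with the identity $M_{\xi_{u'}} = M_{(\sigma^N_u)_{y_0}}^{\,l'}$ shows that $M_\xi$ and $M_\sigma^{Nl'}$ agree on their eigenvalues up to $0$ and roots of unity. To reduce the exception set to $\{0,1\}$, I would replace $\xi$ by a power $\xi^k$, still a proper primitive substitution satisfying $\Omega_{\xi^k} = \Omega_\xi$; choosing $k$ as a common multiple of the orders of every root of unity appearing as an eigenvalue of the finitely many integer matrices involved forces $M_{\xi^k} = M_\xi^{\,k}$ and $M_\sigma^{Nl'k}$ to agree up to $0$ and $1$, after which one sets $l = Nl'k$. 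The main obstacle is the properness step: producing a prefix $u$ for which $\sigma^N_u$ is proper requires careful combinatorial control over how return-word decompositions propagate under $\sigma^N$, drawing on left-sided rigidity from $u$ being a prefix of the fixed point and right-sided rigidity extracted from primitivity; the remaining steps are direct invocations of the cited propositions or routine manipulations of powers of integer matrices.
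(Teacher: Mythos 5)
Your overall architecture (pass to a return substitution to gain properness, exduce back via Proposition \ref{expansion}, control eigenvalues via Proposition \ref{subst_retourvp}) matches the paper's, but there is a genuine gap at the crucial properness step. You claim that choosing the prefix $u$ long enough makes the return substitution $\sigma^N_u$ both left \emph{and} right proper, the latter ``by a symmetric argument on the right''. No such symmetric argument exists: the return-word construction is intrinsically one-sided, since return words are taken to a \emph{prefix} $u$ of the fixed point. Left properness comes from the fact that (after taking a power) $\sigma(u)$ begins with $\Theta_{x,u}(1)u$, which forces every $\Theta_{x,u}\circ\sigma_u(i) = \sigma(\Theta_{x,u}(i))$ to begin with the return word coded by $1$; but the \emph{last} return word occurring in the decomposition of $\sigma(\Theta_{x,u}(i))$ depends on the suffix of $\Theta_{x,u}(i)$, which varies with $i$. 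The Tribonacci substitution already defeats the claim: the paper notes $\tau_1=\tau$, which is left proper but not right proper, and no power of $\tau$ is right proper either, since the last letters of $\tau^n(1),\tau^n(2),\tau^n(3)$ are permuted cyclically.

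The paper fills this hole with an extra trick your proposal is missing: having produced a merely \emph{left} proper primitive $\xi'$ via Proposition \ref{expansion}, it writes $\xi'(b)=a\,w(b)$ for all $b$, sets $\xi''(b)=w(b)\,a$, and takes $\xi=\xi'\circ\xi''$, which is proper, generates the same subshift (since $\xi'$ and $\xi''$ have the same language), and satisfies $M_\xi=M_{\xi'}M_{\xi''}=M_{\xi'}^2$, so the eigenvalue statement survives with $l$ doubled. On the positive side, your explicit passage to a further power of $\xi$ to convert the ``roots of unity'' exception of Proposition \ref{subst_retourvp} into the ``$1$'' of the corollary is a detail the paper leaves implicit, and you handle it correctly; but as written the proof does not go through without the composition trick (or some substitute for it) to obtain right properness.
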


\begin{proof}  Considering a power of $\sigma$ instead of $\sigma$ if needed, we can assume it 
has an admissible fixed point $x$. Let us fix a nonempty prefix $u$ of $x_{[0,+\infty )}$.
 We  can also  assume  that  the word $\Theta_{x,u}(1)u$ is a prefix of  $\sigma(u)$.   
By the very definition of return word, for any letter $i \in R_{x,u}$, the word $\Theta_{x,u}(i)u$ has the word $u$ as a prefix.   
Then $\Theta_{x,u}(1)u$ is a  prefix of the word $\sigma (\Theta_{x,u} (i) u)$. 
It follows from  the equality in Proposition \ref{subst_retour},  that  
$\Theta_{x,u}(1)u$ is also a prefix  of the word $\Theta_{x,u}\circ  \sigma_{u}(i)$. The uniqueness of the coding by $\Theta_{x,u} (R_{x,u})$, implies that the word $\sigma_{u}(i)$ starts with $1$, and the substitution $\sigma_{u}$ is left proper. 

Proposition \ref{expansion} (together with Proposition \ref{prop:circular}), applied to the admissible fixed point $y=D_u (x) $ of $\sigma_u$ and to $\Theta = \Theta_{x,u}$, gives the existence
of a left proper primitive substitution $\xi': C^* \to C^*$ such that $(\Omega ({\sigma}), S)$ is conjugate to $(\Omega ({\xi'}), S)$.
Moreover, by Proposition  \ref{subst_retourvp}  and \ref{expansion} there exists an integer $l>0$ such that the incidence matrices $M_{\sigma}^l$ and $M_{\xi'}$ share the same eigenvalues, except perhaps 0 and 1.

Let us recall some morphism relations we have from the definitions and the proof of Proposition \ref{expansion} taking $\sigma_u$ for $\tau$, $\Theta_u$ for $\Theta$:
\begin{align*}
\Theta_u \circ \sigma_u & =  \sigma \circ \Theta_u, \  \xi' \circ \psi =  \psi \circ \sigma_u , \  \phi \circ \psi  = \Theta_u 
\end{align*}
and thus 
\begin{align}
\label{eq:boulette}
\phi \circ \xi' \circ \psi  & = \sigma \circ \Theta_u .
\end{align}

To obtain a proper substitution we need to modify $\xi' $.
Let $a\in C$ be the letter such that for all letter $b\in C$, $\xi' (b)=  aw(b)$  for some word $w(b)$. 
Now consider the substitution $\xi'' : C^* \to C^*$ defined by $\xi'' \colon b \mapsto w(b)a$.
Observe that $\xi''$ is primitive and, for all $n$, one gets $\xi'^n (b) a =  a\xi''^n (b) $.
Then, $\xi'$ and $\xi''$ define the same language,  so we have  $\Omega ({\xi'}) = \Omega ({\xi''})= \Omega ({\xi})$, where $\xi = \xi' \circ \xi''$.
Thus $\xi $ is clearly proper. 
We conclude observing that $M_{\xi} = M_{\xi'} M_{\xi''} = M_{\xi'}^2$.
 
\end{proof}

In terms of Pisot substitutions, Corollary \ref{coro:wPisot} becomes: 

\begin{coro}\label{cor:Pisotsub}
Let $\sigma$ be a substitution of Pisot type, then the substitution subshift associated with $\sigma$ is conjugate to  a substitution subshift $(\Omega ({\xi}), S)$, where $\xi$ is a proper primitive substitution of weakly irreducible Pisot type.

Moreover, the spectrum of its incidence matrix $M_{\xi}$ is that of a power of $M_{\sigma}$ except perhaps 0 and 1.   
\end{coro}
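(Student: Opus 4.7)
The plan is to apply Corollary \ref{coro:wPisot} to $\sigma$ and then verify that the proper primitive substitution it produces is of weakly irreducible Pisot type. Corollary \ref{coro:wPisot} already yields a proper primitive substitution $\xi$ on some alphabet $B$ such that $(\Omega_\sigma, S)$ is conjugate to $(\Omega_\xi, S)$ and such that, for some integer $l \ge 1$, the matrices $M_\sigma^l$ and $M_\xi$ share the same eigenvalues up to the possible addition of $0$ and $1$. Everything therefore reduces to inspecting the spectrum of $M_\xi$.

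Let $\beta > 1$ denote the dominant eigenvalue of $M_\sigma$ and $\beta_2, \ldots, \beta_d$ its other eigenvalues, which by the Pisot hypothesis satisfy $0 < |\beta_j| < 1$. As recalled in the paper, Pisot substitutions are irreducible, so $P_\sigma$ is the minimal polynomial of $\beta$; hence $\beta$ is a Pisot--Vijayaraghavan number and the $\beta_j$ are precisely its Galois conjugates over $\QQ$.

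Raising to the $l$-th power, $\beta^l > 1$ is again a PV number: any Galois conjugate of $\beta^l$ has the form $\tau(\beta)^l$ for some embedding $\tau$ of $\QQ(\beta)$ into $\CC$, so it equals $\beta_j^l$ for some $j$ and has modulus strictly less than $1$. The eigenvalues of $M_\sigma^l$, counted with multiplicity, are therefore $\beta^l, \beta_2^l, \ldots, \beta_d^l$, with every $\beta_j^l$ ($j \ge 2$) a Galois conjugate of $\beta^l$ of modulus in $(0, 1)$.

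Combining this with Corollary \ref{coro:wPisot}, the characteristic polynomial $P_\xi$ has $\beta^l$ as its dominant root, accompanied by Galois conjugates of $\beta^l$ (all of modulus strictly less than $1$) together with possibly additional roots equal to $0$ or $1$. Since $1$ is a root of unity and $0$ is explicitly permitted in Definition \ref{def:Pisotsub}, this matches the weakly irreducible Pisot condition exactly, so $\xi$ is proper, primitive, and of W.\ I.\ Pisot type. The only genuinely delicate point is the algebraic fact that raising to the $l$-th power preserves the \emph{Galois conjugate of $\beta$} structure, which rests on the irreducibility of $P_\sigma$; once that is in hand, the corollary is just a reading of Corollary \ref{coro:wPisot}.
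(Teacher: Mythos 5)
Your proof is correct and follows exactly the route the paper intends: the paper states Corollary \ref{cor:Pisotsub} as an immediate rephrasing of Corollary \ref{coro:wPisot} with no separate proof, and your argument simply supplies the (correct) algebraic verification that, since $P_\sigma$ is irreducible, the eigenvalues of $M_\sigma^l$ are the Pisot--Vijayaraghavan number $\beta^l$ together with its Galois conjugates, so that adjoining $0$ and $1$ keeps $\xi$ within the weakly irreducible Pisot class.
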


The example after Proposition \ref{subst_retourvp} shows that the use of return substitutions seems to force to deal with W. I. Pisot substitutions. 
In fact, it is unavoidable to consider W. I. Pisot substitution to represent a Pisot substitution subshift by a proper substitution. 
For instance, consider the  non-proper substitution $\sigma : 0 \mapsto 001,$ $1 \mapsto 10$. The dimension group of the associated subshift, computed in \cite{Durand:1996},  is of rank $3$. As a consequence, any proper substitution $\xi$ representing  the subshift $\Omega ({\sigma})$ should be, at least, on $3$ letters (see \cite{Durand&Host&Skau:1999} for the details). 
Moreover, Cobham's theorem (see Theorem 14 in \cite{Durand:1998c}) for minimal substitution subshifts implies that, taking powers if needed, $\xi$ and $\sigma$ share the same dominant eigenvalue. So, the substitution $\xi$ can not be irreducible.

\section{Conjugacy with a domain exchange}\label{sec:domex}
In this section we give sufficient conditions  on a  primitive proper substitution so that the associated substitution subshift is measurably conjugate to a domain exchange in an Euclidean space. 
\subsection{Using Kakutani-Rohlin partitions}\label{subsec:KR}

In this subsection, we will assume that $\xi$ is a primitive proper  aperiodic substitution  on a finite  alphabet $A_{\xi}$ equipped with a fixed order.

First let us recall a structural property of the system $(\Omega ({\xi}), S)$ in terms of Kakutani-Rohlin partitions.

\begin{prop}[\cite{Durand&Host&Skau:1999}]\label{substKR}
Let $\xi$ be an aperiodic primitive proper substitution on a finite alphabet $A_{\xi}$. Then for every $n > 0$,  
$$ 
\P_{n} =\{S^{-k} \xi^{n-1} ([a]) ; \  a \in A_{\xi}, \ 0 \le k \le \vert \xi^{n-1} (a) \vert -1 \}
$$
 is a clopen partition of $\Omega ({\xi})$ defining a nested sequence of Kakutani-Rohlin partitions of $\Omega ({\xi})$, that is to say:
\begin{itemize}
\item 
The sequence $(\xi^n(\Omega ({\xi})))_{n\ge 0}$ is decreasing and the intersection is only one point;
\item 
For every $n>0$, $\P_{n+1}$ is finer than $\P_{n}$;
\item 
The sequence $(\P_{n})_{n>0}$ spans the topology of $\Omega ({\xi})$.
\end{itemize}
\end{prop}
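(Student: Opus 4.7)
The entire proposition reduces to a single technical lemma that I would establish first: the \emph{unique desubstitution property}. For each $m\ge 0$, every $x\in\Omega_\xi$ admits a unique representation $x=S^{-k}\xi^m(y)$ with $y\in\Omega_\xi$ and $0\le k\le|\xi^m(y_0)|-1$. Existence comes from a minimality-plus-compactness argument: pick a two-sided fixed point $\omega_*=\xi^m(\omega_*)$ (which exists because $\xi$ is proper), approximate $x$ by shifts $S^{-r_j}\omega_*$, split each $r_j$ against the $\xi^m$-block boundaries of $\omega_*$, and extract a convergent subsequence by compactness. Uniqueness is the core content and uses properness crucially: the $\xi^m$-block boundaries of any sequence in $\Omega_\xi$ are intrinsic to the sequence, because the common first letter $a_L$ and last letter $a_R$ of all $\xi^m$-images let one recognize boundaries locally. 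This is Moss\'e's recognizability theorem, and its proof in the proper case is particularly elementary.

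Granting the lemma, the partition and clopen statements are immediate. The map $(y,k)\mapsto S^{-k}\xi^{n-1}(y)$ on $\bigsqcup_{a\in A}[a]\times\{0,\dots,|\xi^{n-1}(a)|-1\}$ is a bijection onto $\Omega_\xi$, so $\P_n$ partitions $\Omega_\xi$; each cell is closed as the image of a compact clopen set under a continuous injection composed with the homeomorphism $S^{-k}$, and a finite partition of a compact Hausdorff space into closed pieces automatically consists of clopen sets.

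For nestedness $\P_{n+1}$ refines $\P_n$, take a cell $S^{-k}\xi^n([b])\in\P_{n+1}$. Decomposing $\xi(b)=c_0\cdots c_{m-1}$ and using $\xi^n=\xi^{n-1}\circ\xi$, the value of $k$ identifies a unique index $j$ with $\sum_{i<j}|\xi^{n-1}(c_i)|\le k<\sum_{i\le j}|\xi^{n-1}(c_i)|$; a bookkeeping calculation with block lengths yields a representation $x=S^{-k'}\xi^{n-1}(y')$ in which $y'_0=c_j$ and $k'$ depends only on $(k,b)$, so the entire $\P_{n+1}$-cell lies inside a single $\P_n$-cell. The base-decrease $\xi^{n+1}(\Omega_\xi)\subseteq\xi^n(\Omega_\xi)$ is immediate, and a short induction using properness shows any $x\in\xi^n(\Omega_\xi)$ must begin with $\xi^{n-1}(a_L)$ to the right of $0$ and with $\xi^{n-1}(a_R)$ immediately to the left; letting $n\to\infty$ forces $x$ to equal the unique two-sided fixed point $\omega$ characterized by $\omega_{-1}\omega_0=a_Ra_L$.

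The most delicate step is the spanning-of-topology claim, equivalent to $\bigcap_n C_n(x)=\{x\}$ for each $x$, where $C_n(x)\in\P_n$ denotes the cell containing $x$. A~priori two points in $C_n(x)$ agree only on the single $\xi^{n-1}$-block at one specified position; agreement on a full two-sided neighbourhood of $0$ requires a further argument. The rescue is to re-run the properness argument one level down: any neighbouring decomposition letter, whatever it is, has its $\xi^{n-1}$-image beginning with $\xi^{n-2}(a_L)$ and ending with $\xi^{n-2}(a_R)$, extending the agreement by one $\xi^{n-2}$-block in each direction. Iterating downwards yields an agreement window of length of order $|\xi^{n-1}|$, which tends to infinity by primitivity, so the cells collapse to points in the product topology. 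The main technical obstacle I anticipate is making this iterative window-enlargement rigorous and uniform in $x$, particularly when $x$ lies near the shift orbit of the fixed point.
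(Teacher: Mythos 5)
Your overall architecture is sound and is essentially the one behind the result as proved in the cited source \cite{Durand&Host&Skau:1999} (the paper itself gives no proof, only the citation): reduce everything to unique desubstitution, deduce the partition and clopenness from the resulting bijection, do the block-length bookkeeping for refinement, and use properness both to identify the intersection of the bases with the fixed point $\lim_n \xi^n(a_R).\xi^n(a_L)$ and to enlarge the agreement window for the spanning claim. Your worry about the last step is also resolved correctly, and in fact a single step of the window-enlargement already suffices: two points of the same atom of $\P_n$ agree on $[-|\xi^{n-2}(a_R)|,\,|\xi^{n-2}(a_L)|]$, whose endpoints go to infinity by primitivity, so no delicate uniformity issue arises.

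The one genuine weak point is your treatment of uniqueness in the desubstitution lemma. The claim that properness makes recognizability ``particularly elementary'' because the common first letter $a_L$ and last letter $a_R$ ``let one recognize boundaries locally'' is not a proof and, taken literally, fails: the word $a_Ra_L$ may occur inside $\xi^m(b)$ for some letter $b$ at positions that are not block boundaries, so seeing $a_Ra_L$ does not certify a cutting point. Uniqueness of the decomposition really is Moss\'e's recognizability theorem (bilateral recognizability of aperiodic primitive substitutions), which is a substantial result and is exactly the external input used in \cite{Durand&Host&Skau:1999}; if you cite it your proof is complete, but you should not suggest it admits a short local argument in the proper case. Relatedly, recognizability --- and the proposition itself --- requires aperiodicity, which is implicit in the paper's setting but worth making explicit: for the primitive proper substitution $a\mapsto ab$, $b\mapsto ab$ the subshift is a two-point periodic orbit, $\xi([a])=\xi([b])$, and $\P_2$ is not a partition. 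So you should state aperiodicity as a standing hypothesis before invoking Moss\'e.
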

The fact that the family of clopen sets $\P_{n}$ is a partition is an immediate consequence Moss\'e's theorem asserting that aperiodic primitive substitutions are recognizable (see \cite{Mosse:1992,Mosse:1996}).

 To be coherent with the notations in \cite{Bressaud&Durand&Maass:2005}, we take the conventions $\P_{0} = \{ \Omega ({\xi}) \} $ and 
for an integer $n \ge 1$, $r_{n}(x) $ denotes the {\em entrance time} of a point $x \in \Omega ({\xi})$ in 
$\xi^{n-1}(\Omega ({\xi}))$, that is
$$
r_n (x) = \min \{ k\geq 0 ; \ S^k x \in \xi^{n-1} (\Omega (\xi ) ) \}. 
$$

By minimality, this value is finite for any $x \in \Omega ({\xi})$ and the function $r_{n}$ is continuous.

The homeomorphism $ S_{\xi(\Omega ({\xi}))} \colon \xi(\Omega ({\xi})) \ni x \mapsto S^{r_{2}(Sx)}(Sx) \in \xi(\Omega ({\xi}))$ is what is usually called the induced map of the system $(X,S)$ on the clopen set $\xi(\Omega ({\xi}))$. Since we have the relation 
\begin{align}\label{eq:sysinduit}
\xi \circ S =  S_{\xi(\Omega ({\xi}))} \circ \xi,
\end{align}
the induced system $(\xi(\Omega ({\xi})), S_{\xi(\Omega ({\xi}))})$ is a factor of $(\Omega ({\xi}),S)$ via the map $\xi$ (and in fact a conjugacy  when $\xi$ is aperiodic).

Note that for any integer $n>0$,
\begin{align}\label{eq:tpsretour}
r_{n}(Sx) - r_{n}(x) = \begin{cases} -1 & \textrm{if } x \not\in \xi^{n-1}(\Omega ({\xi})) \\ 
 \vert \xi^{n-1}(a) \vert -1 & \textrm{if } x \in \xi^{n-1}([a]), a\in A_{\xi}. 
   \end{cases}  
\end{align}

More precisely, we can relate the entrance time and the incidence matrix by the  following equality (see Lemma 4 in \cite{Bressaud&Durand&Maass:2005}): For a primitive proper substitution $\xi$, we have  for any $x \in \Omega ({\xi})$ and $n \ge 2$
\begin{align}\label{eq:rn}
r_n(x) = \sum_{k=1}^{n-1}
\langle s_k (x) ,(M_\xi^t)^{k-1} H(1)\rangle ,
\end{align}

where $\langle \cdot, \cdot \rangle$ denotes the usual scalar product, $M_{\xi}^t$ is the transpose of the incidence matrix,  $H(1) =(1, \ldots, 1)^t $ and  $s_k : \Omega (\xi ) \to \mathbb{Z}^{A_{\xi}}$ is a continuous function defined by
$$
s_k (x)_a = \# \{ r_k (x) < i \leq r_{k+1} (x) ; \  S^i x \in \xi^{k-1} ([a])  \}, \hspace{1cm} \textrm{for }a \in A_{\xi}.
$$

The sequence of vectors $(s_{k}(x))_{k}$ provides a coding of the orbit of points that is analog to the prefix-suffix expansion \cite{Fogg:2002}.
In other words, the vector $s_{k}(x)$ counts, in each coordinate $a \in A_{\xi}$, the number of time that the positive iterates of $x$ meet the clopen set $\xi^{k-1}([a])$ until meeting, for the first time, the clopen set $\xi^{k}(\Omega ({\xi}))$ and after meeting the clopen set $\xi^{k-1}(\Omega ({\xi}))$. 

The proof of the following lemma is direct from the  definitions of $s_k$ and $r_k$ and Proposition \ref{substKR}.

\begin{lemma} 
\label{lemma:lemmeclassic}
For $\xi$ a primitive proper aperiodic substitution, we have, for any $x \in \Omega ({\xi})$,
\begin{align*}
s_{1}(\xi (x)) = 0 \hspace{0.5cm} \textrm{ and } \hspace{0.5cm} \forall k >1, \ 
s_{k}(\xi (x)) = s_{k-1}(x). 
\end{align*}
For every letter $a \in A_{\xi}$, $k\in \NN^*$,  we also  have  $ s_{k} (x)_{a} \le \sup_{b \in A_{\xi}} \vert \xi(b) \vert$. 
\end{lemma}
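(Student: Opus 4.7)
The plan is to handle the three assertions separately. The first identity $s_1(\xi x) = 0$ is essentially immediate from the conventions: $\P_0 = \{\Omega_\xi\}$ gives $r_1(y) = 0$ for every $y \in \Omega_\xi$, while $\xi x \in \xi(\Omega_\xi) = \xi^{2-1}(\Omega_\xi)$ forces $r_2(\xi x) = 0$. Hence the index set $\{i : r_1(\xi x) < i \leq r_2(\xi x)\}$ is empty and $s_1(\xi x)_a = 0$ for every letter $a$.

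For the identity $s_k(\xi x) = s_{k-1}(x)$ with $k > 1$, the main tool is the conjugacy relation (\ref{eq:sysinduit}). Iterating gives $\xi \circ S^j = (S_{\xi(\Omega_\xi)})^j \circ \xi$ on $\Omega_\xi$, so the successive visits $i_0 < i_1 < \cdots$ of the forward orbit $(S^i \xi x)_{i \ge 0}$ to the clopen set $\xi(\Omega_\xi)$ are in bijection with $\NN$ via $S^{i_j}(\xi x) = \xi(S^j x)$. Since $\xi^{k-1}(\Omega_\xi) \subseteq \xi(\Omega_\xi)$ and $\xi^{k-1}([a]) = \xi(\xi^{k-2}([a]))$, the condition $S^i(\xi x) \in \xi^{k-1}([a])$ translates into $S^j x \in \xi^{k-2}([a])$ for the corresponding $j$. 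Matching first entry times under this bijection yields $r_k(\xi x) = i_{r_{k-1}(x)}$ and $r_{k+1}(\xi x) = i_{r_k(x)}$, so counting visits in $(r_k(\xi x), r_{k+1}(\xi x)]$ is the same as in $(r_{k-1}(x), r_k(x)]$, whence $s_k(\xi x)_a = s_{k-1}(x)_a$.

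For the uniform bound $s_k(x)_a \le \sup_{b \in A} |\xi(b)|$, I would argue directly on the Kakutani--Rohlin tower structure of Proposition \ref{substKR}. Writing $\xi^k(b) = \xi^{k-1}(\xi(b))$, every $\xi^k$-block factors as the concatenation of $|\xi(b)|$ consecutive $\xi^{k-1}$-sub-blocks. The point $S^{r_k(x)}x$ lies at the start of some such sub-block inside a uniquely determined $\xi^k(b)$-block of $x$, while $S^{r_{k+1}(x)}x$ is the start of the next $\xi^k$-block. Hence in the interval $(r_k(x), r_{k+1}(x)]$, one encounters at most $|\xi(b)|$ new $\xi^{k-1}$-block starts (the remaining sub-blocks in the current $\xi^k(b)$ plus the first one of the next $\xi^k$-block), which is bounded by $\sup_{b' \in A} |\xi(b')|$. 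The main obstacle I foresee is the bookkeeping in the second step: verifying the identifications $r_k(\xi x) = i_{r_{k-1}(x)}$ and $r_{k+1}(\xi x) = i_{r_k(x)}$; once this is established, the rest is direct counting on the tower partitions.
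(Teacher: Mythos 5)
Your proof is correct and follows exactly the route the paper intends: the paper omits the argument entirely, stating only that the lemma is ``direct from the definition and Proposition~\ref{substKR}'', and your three steps (the empty index set $\{r_1(\xi x) < i \le r_2(\xi x)\}$, the bijection between visit times of $\xi x$ to $\xi(\Omega_\xi)$ and the orbit of $x$ coming from the conjugacy \eqref{eq:sysinduit}, and the count of $\xi^{k-1}$-sub-towers inside one $\xi^{k}$-tower) are precisely that direct verification. The only implicit ingredient worth flagging is the injectivity of $\xi$ on $\Omega_\xi$ when you translate $S^{i_j}\xi x \in \xi^{k-1}([a])$ into $S^j x \in \xi^{k-2}([a])$, but this is exactly the fact the paper records when it says $\xi$ is ``in fact a conjugacy'' onto the induced system.
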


From the ergodic point of view,  it is well-known (see \cite{Queffelec:2010}) that subshifts generated by primitive substitutions are uniquely ergodic.
We call $\mu$ the unique  probability  shift-invariant measure of $(\Omega (\xi ) , S)$. We have the following relations, for any positive integer $n$,
\begin{align}\label{eq:measure}
\vec{\mu}({n}) = M_{\xi} \vec{\mu}({n+1}), \hspace{0.5cm} \textrm{ and } \hspace{0.5cm} \langle H(1), \vec{\mu}(1) \rangle = 1,
\end{align}
where $\vec{\mu}({n}) \in \RR^{A_{\xi}}$ is the vector defined by
$$ \vec{\mu}(n)_{a} = \mu(\xi^{n-1}([a])), \hspace{1cm} \textrm{ for every letter } a \in A_{\xi}.$$  
It is well-known \cite{Queffelec:2010} that $\vec{\mu} (1)$ is a Perron eigenvector of the dominant 
eigenvalue $\theta$ of $M_\xi$ and $\vec{\mu} (n)=\theta^{n-1} \vec{\mu} (1)$.



\subsection{On the spectrum  of a substitution subshift}\label{subsec:spectrum}

From this subsection, we assume that $\xi $ is a  primitive proper substitution  on a finite  alphabet $A_{\xi}$ and $\mu$ be the unique invariant probability measure of $(\Omega (\xi),S)$.

Fix some $ \rho_1<1$ greater than the maximum of the modulus of eigenvalues  of $M_\xi$ smaller than $1$. 
Taking a power of $\xi$ if needed,  from classical results of linear algebra, there are $M_{\xi }^{t}$-invariant $\RR$-vectorial subspaces $E^0, E^{wu}$ and $E^s$, a norm $\| \cdot\|_{\xi}$  and constants  $C_{s}, C_{wu},  >0$   such that  for any $n \in \NN$ 

\begin{enumerate}
\item
$\RR^{A_{\xi}} = E^0 \oplus E^s \oplus E^{wu}$, 
\item
$\ker M_\xi^t = E^{0}$,

\item  
$ 0<\|(M_\xi^{t})^n v \|_{\xi}  \le C_{s}  \rho_{1}^n  \|v\|_{\xi}$,  for all $v\in E^s\setminus\{0\}$,
\item  
$\| (M_\xi^{t})^n v\|_{\xi }\ge C_{wu}  ((1+\rho_{1})/2)^{n}  \|v\|_{\xi}$,  for all $v\in E^{wu}$.\end{enumerate}

Roughly speaking, the (stable) space $E^{s}$ corresponds to  the eigenspaces associated to non zero eigenvalues with modulus strictly less than $1$.
The (weakly unstable) space $E^{wu}$, corresponds to  the eigenspaces associated to non zero eigenvalues with modulus greater or equal to $1$.

Let us apply some well-known arguments (see \cite{Host:1986, Ferenczi&Mauduit&Nogueira:1996} for substitutions and \cite{Bressaud&Durand&Maass:2005} for a wider context).
Let $r_n$ and $s_n$ be as defined in Section \ref{subsec:KR}. 

\begin{prop}\label{prop:condcont}  Let $\xi$ be a primitive proper substitution on an alphabet $A_{\xi}$. 
Let  $\lambda \in \bS$.  The following statement are equivalent.
\begin{itemize}
	\item The complex number $\lambda $ is an eigenvalue of  the system $(\Omega (\xi ) , S)$.
	\item The sequence $(\lambda^{-r_n}) _{ n\geq 1}$ converges uniformly to a continuous  eigenfunction associated with $\lambda$.
 	\item The sum $\sum_{n\ge 1} \max_{a\in A_{\xi}} |\lambda^{|\xi^n (a)|}-1|$ converges.
	\item For any $a\in A_{\xi}$, the sequence $(\lambda^{|\xi^n (a)|})_{n \ge 0}$ converges to $1$ when $n$ goes to infinity.
\end{itemize}
\end{prop}

So if $\exp(2i\pi\alpha)$ is an eigenvalue of the substitution subshift $(\Omega ({\xi}), S)$,  for any letter $a$ of the alphabet $\vert \xi^{n} (a) \vert \alpha$ converges to $0$ mod $\ZZ$  as $n$ goes to infinity. In an equivalent way the vector $(M_{\xi}^t)^n \alpha (1,\ldots, 1)^t $ tends to $0$ mod $\ZZ^{A_{\xi}}$. The next lemma precises this  for the usual convergence.    

\begin{lemma}
\label{lemme:condcont}  
Let $\xi$ be a primitive proper substitution on an alphabet $A_{\xi}$ and let $\lambda=\exp(2i\pi\alpha)$ be an eigenvalue of the substitution subshift  $(\Omega (\xi ) ,S)$. 
Then, there exist $m \in \NN$, $v, w \in
\RR^{A_{\xi}}$  such that
\begin{align}\label{eq:lem14}
\displaystyle \alpha H(1)=v + w, \hspace{1cm} (M_{\xi}^{t})^{m} w \in \ZZ^{A_{\xi}} \hbox{ and } (M_\xi^t)^{n} v
\rightarrow_{n\to \infty} 0,
\end{align}

where all entries of $H(1)$ are equal to $1$. Moreover, for every  vectors $v,w$ satisfying \eqref{eq:lem14}
\begin{enumerate}[label= \roman*)] 
\item\label{it:L14i} The convergence is geometric: there exist $0 < \rho \le \rho_{1} < 1$ and a constant $C$ such that 
$$ \vert\vert   (M_\xi^t)^{n} v \vert \vert \le C \rho^n, \textrm{ for any } n\in \NN.$$
\item\label{it:L14ii} For any positive integer $n$, 
$$  \langle v, \vec{\mu}(n) \rangle =0  \hspace{0.5cm} \textrm{ and } \hspace{0.5cm}  \alpha = \langle (M^{t}_{\xi})^{n-1} w, \vec{\mu}(n) \rangle.$$
\end{enumerate}
\end{lemma}
 
 \begin{proof}
 The first claim \eqref{eq:lem14} comes from \cite[Lemme 1]{Host:1986} and  Item \ref{it:L14i} follows from the observation we made before the statement of the lemma. 
We just need to show  Item \ref{it:L14ii}.
 Notice that  the relations (\ref{eq:measure}) give us for any positive integer 
 \begin{align*}
 \langle v ,\vec{\mu}(n) \rangle = \langle v, M_{\xi}^{p} \vec{\mu}(n+p) \rangle = \langle  (M_{\xi}^{t})^{p} v, \vec{\mu}(n+p) \rangle \to_{ p\to +\infty} 0.
 \end{align*}
 Then, we deduce
 \begin{align*}
  \alpha & =  \alpha \langle H(1), \vec{\mu}(1) \rangle = \langle v, \vec{\mu}(1) \rangle + \langle w, \vec{\mu}(1) \rangle \\
  & = \langle w, \vec{\mu}(1) \rangle =    \langle (M^{t}_{\xi})^{n-1} w, \vec{\mu}(n) \rangle.
  \end{align*}
 
 \end{proof}

Observe that the decomposition $\alpha H(1)= v+w$ is not unique since for every $u \in E^0$, we have $\alpha H(1) = (v-u) + (w+u)$ is another decomposition fulfilling the properties \eqref{eq:lem14}.

\medskip

If $\exp(2i\pi \alpha_{1} ), \ldots, \exp(2i \pi \alpha_{d})$ are $d$ eigenvalues of the substitution subshift $(\Omega ({\xi} ), S)$, from Proposition \ref{prop:condcont} and Lemma \ref{lemme:condcont} there exist $m \in \NN$, $v (1), \dots v(d),$ $w (1) , \dots ,w (d) \in
\RR^{{A_{\xi}}}$  such that for all $i \in \{ 1, \dots , d\}$:

\begin{equation}\label{eq:wetv}
 \alpha_i  H(1)=v(i) + w(i), \hspace{0.5 cm} (M_\xi^{t})^m w(i) \in \ZZ^{A_{\xi}} \hbox{ and } \sum_{n\ge 1} (M_\xi^{t})^n v(i)
\textrm{ converges}. 
\end{equation}

Notice that up to take a power of $\xi$,  we can assume below  that  the constant $m$ is equal to $1$. By the observation made after Lemma \ref{lemme:condcont}, we also assume that each  $v(i)$ has no component in $E^{0}$. 

\medskip

We recall Proposition \ref{prop:vpPisot}: a unimodular Pisot substitution subshift on the alphabet $A$ admits $\#A-1$ non trivial  eigenvalues
$\exp(2i\pi \alpha_{1} )$, $\ldots,\exp(2i \pi \alpha_{\#A-1})$  that are {\em multiplicatively independent}, {i.e.} the values $1, \alpha_{1}, \ldots, \alpha_{\#A-1}$ are rationally independent.
This motivates the next proposition that interprets the arithmetical properties of the eigenvalues  in terms of the vectors $v(i)$ and $w(i)$. 
\begin{prop}
\label{prop:linind}  Let $\xi$ be a primitive proper substitution. If the complex numbers $\exp(2i\pi \alpha_{1} )$, $ \ldots,$ $\exp(2i \pi \alpha_{d})$ are $d$ multiplicatively independent eigenvalues  of the substitution subshift $(\Omega ({\xi} ), S)$.
Then, 
both families of vectors 
$$\{M_{\xi}^tv(1),\ldots, M_{\xi}^tv ({d}) \} \hbox{ and } \{M_{\xi}^tH(1),M_{\xi}^t w({1}), \ldots, M_{\xi}^t w({d}) \}$$  
are linearly independent. 
\end{prop}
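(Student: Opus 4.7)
The plan is to deduce both assertions from one core fact: the family $\{v(1),\ldots,v(d-1)\}$ is itself $\RR$-linearly independent. I will establish this via the continuous eigenfunctions of Proposition~\ref{prop:condcont} combined with the minimality of the rotation by $\vec{\alpha}=(\alpha_1,\ldots,\alpha_{d-1})$ on $\TT^{d-1}$ that follows, through Kronecker's criterion, from the multiplicative independence of the $\lambda_i=\exp(2i\pi\alpha_i)$.

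For the reductions, after passing to a power of $\xi$ I may assume $m=1$ in Lemma~\ref{lemme:condcont}, $\ker M_\xi^t=E^0$, and $v(i)\in E^s$ for every $i$. A relation $\sum c_i M_\xi^t v(i)=0$ places $\sum c_i v(i)$ in $\ker M_\xi^t\cap E^s=E^0\cap E^s=\{0\}$, so $\sum c_i v(i)=0$ and the core fact yields $c_i=0$. A relation $c_0 M_\xi^t H(1)+\sum c_i M_\xi^t w(i)=0$ places $c_0 H(1)+\sum c_i w(i)$ in $E^0$; substituting $w(i)=\alpha_i H(1)-v(i)$ this becomes $(c_0+\sum c_i\alpha_i)H(1)-\sum c_i v(i)\in E^0$. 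Primitivity of $\xi$ makes $(M_\xi^t)^n H(1)$ grow, so $H(1)$ has a non-zero $E^u$-component, whereas $v(i)\in E^s$ has none; projecting onto $E^u$ forces $c_0+\sum c_i\alpha_i=0$. Projecting then onto $E^s$ gives $\sum c_i v(i)=0$, so the core fact yields $c_i=0$ for $i\ge 1$, and finally $c_0=0$.

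To prove the core fact I make the eigenfunctions explicit. Combining (\ref{eq:rn}) with $\alpha_i H(1)=v(i)+w(i)$ produces the decomposition
\begin{equation*}
\alpha_i r_n(x)=\sum_{k=1}^{n-1}\langle s_k(x),(M_\xi^t)^k v(i)\rangle+\sum_{k=1}^{n-1}\langle s_k(x),(M_\xi^t)^k w(i)\rangle.
\end{equation*}
The second sum is an integer (since $s_k(x)\in\ZZ^{\#A}$ and $(M_\xi^t)^k w(i)\in\ZZ^{\#A}$ for $k\ge 1$), while the first converges by the geometric decay of $(M_\xi^t)^k v(i)$ together with the uniform bound on $s_k$ from Lemma~\ref{lemma:lemmeclassic}. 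Proposition~\ref{prop:condcont} thus identifies the continuous eigenfunction for $\lambda_i$ as $\chi_i(x)=\exp(-2i\pi z_i(x))$ with $z_i(x)=\sum_{k\ge 1}\langle s_k(x),(M_\xi^t)^k v(i)\rangle$.

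The map $\Phi=(\chi_1,\ldots,\chi_{d-1})\colon\Omega_\xi\to\TT^{d-1}$ is continuous and intertwines $S$ with the rotation by $\vec{\alpha}$. Multiplicative independence of the $\lambda_i$ is equivalent to $\QQ$-linear independence of $1,\alpha_1,\ldots,\alpha_{d-1}$, which by Kronecker's theorem makes this rotation minimal on $\TT^{d-1}$; hence the closed, non-empty, invariant set $\Phi(\Omega_\xi)$ equals $\TT^{d-1}$. Now if $\sum c_i v(i)=0$ with $(c_i)\ne 0$, linearity gives $\sum c_i z_i(x)=0$ for every $x$, so $\Phi(\Omega_\xi)\subseteq\pi(H_c)$, where $\pi\colon\RR^{d-1}\to\TT^{d-1}$ is the canonical projection and $H_c=\{y\in\RR^{d-1}\colon\sum c_i y_i=0\}$. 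But $H_c+\ZZ^{d-1}$ is a countable union of translates of a hyperplane and therefore has Lebesgue measure zero in $\RR^{d-1}$, so $\pi(H_c)$ is a proper subset of $\TT^{d-1}$, contradicting $\Phi(\Omega_\xi)=\TT^{d-1}$. The main obstacle is precisely the closed-form identity $\chi_i(x)=\exp(-2i\pi z_i(x))$, which hinges on the integrality of $(M_\xi^t)^k w(i)$ and motivates the preliminary reduction to $m=1$; everything else is a clean application of Perron--Frobenius and Kronecker.
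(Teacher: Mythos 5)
Your proof is correct, but it takes a genuinely different route from the paper's. The paper argues in the opposite order and by a purely algebraic pairing: given an integer relation $\delta_0 M_\xi^t H(1)+\sum_i\delta_i M_\xi^t w(i)=0$ (real coefficients can be taken integral because the vectors lie in $\ZZ^{\#A}$), it takes the inner product with the invariant-measure vector $\vec{\mu}(2)$ and uses \eqref{eq:measure} together with item $ii)$ of Lemma \ref{lemme:condcont} (namely $\langle v(i),\vec{\mu}(n)\rangle=0$ and $\alpha_i=\langle (M_\xi^t)^{n-1}w(i),\vec{\mu}(n)\rangle$) to reduce the relation to $\delta_0+\sum_i\delta_i\alpha_i=0$, which rational independence kills; independence of the $M_\xi^t v(i)$ then follows formally by substituting $v(i)=\alpha_iH(1)-w(i)$. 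You instead prove independence of the $v(i)$ first, by a dynamical argument: the series $z_i=\sum_k\langle s_k,(M_\xi^t)^kv(i)\rangle$ realize the continuous eigenfunctions, the resulting map to $\TT^{d-1}$ is onto by minimality of the Kronecker rotation, and a linear relation among the $v(i)$ would confine the image to the (null, hence proper) projection of a hyperplane. You then deduce the second family's independence from the splitting $E^0\oplus E^s\oplus E^u\oplus E^b$ and the Perron--Frobenius fact that $H(1)$ has a nontrivial $E^u$-component, rather than from the normalization $\langle H(1),\vec{\mu}(1)\rangle=1$. Both arguments hinge on the rational independence of $1,\alpha_1,\ldots,\alpha_{d-1}$; the paper's is shorter and exploits the duality with the left Perron eigenvector, while yours is more conceptual (it identifies the independence of the $v(i)$ with the nondegeneracy of the factor map onto the maximal equicontinuous factor), avoids the clearing-of-denominators step, and anticipates the surjectivity argument the paper only uses later in Proposition \ref{prop:Regul}. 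All the steps you rely on (integrality of $(M_\xi^t)^kw(i)$ after normalizing $m=1$, $\ker M_\xi^t=E^0$, injectivity of $M_\xi^t$ on $E^s$, $v(i)\in E^s$) are available at this point of the paper, so there is no circularity.
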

Notice it also implies that both family of vectors $\{v(1),$ $\ldots$, $v ({d}) \}$ and $\{H(1),$ $w({1})$, $\ldots$, $w({d}) \}$  are linearly independent.

\begin{proof}
This proof is similar to the proof of Proposition 10 in \cite{Bressaud&Durand&Maass:2010}. 
We adapt it to our context, since it does not straightforwardly apply.

Assume there exist real numbers $\delta_{0},  \delta_{1}, \ldots, \delta_{d} $,  one being different from $0$,  such that $\delta_{0} M^{t}_{\xi} H(1)+ \sum_{i=1}^{d} \delta_{i }M_{\xi} ^{t}w({i}) = 0$. 
Since all the vectors are in $\ZZ^{A_{\xi}}$,  we can assume that every $\delta_{i}$ is an integer. 
Taking the inner product of this sum with the vector $\vec{\mu(2)}$, the normalization  and recurrence relations of this vector (Relation (\ref{eq:measure}))  together with  the normalization with respect to  each $w({i})$ in item $ii)$ of Lemma \ref{lemme:condcont},  give us 
$\delta_{0}+ \sum_{i=1}^{d} \delta_{i} \alpha_{i} =0$. 
The rational independence  of the numbers $1, \alpha_{1}, \ldots, \alpha_{d}$ implies that each $\delta_{i}$ equals $0$. 
So the vectors $M_{\xi}^{t}H(1), M_{\xi}^{t}w({1})$, $\ldots$, $M_{\xi}^{t}w({d})$ are independent.

Now, let's assume  there are real numbers $\lambda_{i}$ such that $\sum_{i=1}^{d}\lambda_{i} M_{\xi}^t v({i}) =0$. 
We obtain  $(\sum_{i=1}^{d}\lambda_{i} \alpha_{i})M_{\xi}^t H(1) - \sum_{i=1}^{d} \lambda_{i} M_{\xi}^t w({i}) =0$. 
The independence of the vectors   $M_{\xi}^t H(1), M_{\xi}^t w({1}), \ldots$, $ M_{\xi}^t w({d})$ implies that $\lambda_{i} =0$ for any $i$. So the vectors $M_{\xi}^t v({1}), \ldots, M_{\xi}^t v({d})$ are independent.
\end{proof}

The following property gives a bound on the number of multiplicatively independent eigenvalues for a substitution subshift.
We denote by ${\rm Vect}( u(1) , \dots , u(n))$ the vectorial subspace  spanned by vectors $u(1), \dots , u(n)$.

\begin{prop} \label{prop:ME} Let $\xi$ be a proper primitive substitution. 
If the substitution subshift $(\Omega ({\xi}), S)$ admits $d$ eigenvalues  $\exp(2i\pi \alpha_{i})$, $1\leq i\leq d$, then  the vectorial space spanned by the vectors $v(i)$, 
$$E_{\xi} = {\rm Vect}( v (1), \ldots,  v (d)),$$  is a subspace  of $E^{s}$.
Moreover, if   the eigenvalues are multiplicatively independent,   then  $d\le \dim  E^{s}$. 
\end{prop}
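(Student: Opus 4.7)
The plan is to use the spectral decomposition $\RR^{\sharp A} = E^0\oplus E^s\oplus E^u\oplus E^b$ together with the defining property $(M_\xi^t)^n v(i)\to 0$ recalled in \eqref{eq:wetv} to trap each $v(i)$ inside $E^s$, and then transfer the independence given by Proposition \ref{prop:linind} back from $M_\xi^t v(i)$ to $v(i)$.

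First I would recall the simplification made just after Lemma \ref{lemme:condcont}: replacing $\xi$ by a suitable power, we may assume each $v(i)$ has no component in $E^0$. Writing $v(i)=v_s(i)+v_u(i)+v_b(i)$ in the invariant decomposition $E^s\oplus E^u\oplus E^b$, the invariance of these summands under $M_\xi^t$ yields, for every $n$,
\begin{equation*}
(M_\xi^t)^n v(i)=(M_\xi^t)^n v_s(i)+(M_\xi^t)^n v_u(i)+(M_\xi^t)^n v_b(i),
\end{equation*}
a direct sum decomposition in $E^s\oplus E^u\oplus E^b$. Since the left-hand side tends to $0$ by \eqref{eq:wetv}, each of the three components must tend to $0$ separately.

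The component on $E^s$ already tends to $0$ by definition. Item (4) of the decomposition forces $v_u(i)=0$, since otherwise $\|(M_\xi^t)^n v_u(i)\|\to+\infty$. The only genuine point to check is that $v_b(i)=0$ must hold too: this is where I expect the main obstacle, because $E^b$ is defined only by the boundedness of the orbit, not by it being bounded away from zero. The argument is that $M_\xi^t$ restricted to $E^b$ is invertible (its eigenvalues on $E^b$ have modulus one, hence are nonzero) and has only eigenvalues of modulus one with semisimple action (otherwise iterates would not be bounded); hence any nonzero orbit $\{(M_\xi^t)^n v_b(i)\}_{n\in\ZZ}$ lies in a compact invariant set on which $M_\xi^t$ acts by an isometry in a suitable norm, and therefore cannot accumulate at $0$. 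This forces $v_b(i)=0$, and thus $v(i)\in E^s$ for every $i$, which gives $E_\xi\subset E^s$.

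For the last assertion, I would simply combine this inclusion with Proposition \ref{prop:linind}: when the eigenvalues $\exp(2i\pi\alpha_1),\ldots,\exp(2i\pi\alpha_{d-1})$ are multiplicatively independent, the vectors $M_\xi^t v(1),\ldots,M_\xi^t v(d-1)$ are linearly independent, so any linear relation $\sum c_i v(i)=0$ forces, after applying $M_\xi^t$, all $c_i$ to vanish. Hence $\dim E_\xi=d-1$, and since $E_\xi\subset E^s$ we conclude $d-1\le\dim E^s$, as required.
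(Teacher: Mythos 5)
Your proof is correct and follows essentially the same route as the paper: decompose $v(i)$ in $E^0\oplus E^s\oplus E^u\oplus E^b$, use the convergence $(M_\xi^t)^n v(i)\to 0$ together with the normalization removing the $E^0$-component to conclude $v(i)\in E^s$, then deduce the dimension bound from Proposition \ref{prop:linind} by pulling the linear independence of the $M_\xi^t v(i)$ back to the $v(i)$. The only difference is that you spell out why the $E^b$-component must vanish (invertibility, modulus-one semisimple action, hence an invariant norm in which the orbit cannot accumulate at $0$), a point the paper's proof leaves implicit.
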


\begin{proof}
For $i\in \{1, \dots , d \}$,
the vector $v(i)$ can be decomposed using the $\RR$-vectorial subspaces $E^0, E^{wu}$ and $E^s$ (see the beginning of Section \ref{subsec:spectrum}).
From Lemma \ref{lemme:condcont} and since the norms $\| \cdot \|$ and $\| \cdot \|_{\xi}$ are equivalent, it has no component in $E^{wu}$.
From the choice we made in \eqref{eq:wetv}, it has no component in $E^0$.
Thus $v(i)$ belongs to $E^s$. So we get $E_{\xi}  \subset E^s$. The  bound is obtained  with Proposition \ref{prop:linind}.
\end{proof}

To construct the domain exchange of a unimodular Pisot substitution subshift we will need the following direct corollary.

\begin{coro} \label{coro:ME}  Let $\xi$ be a proper primitive substitution. 
If the substitution subshift $(\Omega ({\xi}), S)$ admits $\dim  E^{s}$ multiplicatively independent eigenvalues, then 
$ E_{\xi} := {\rm Vect}( v (1), \ldots, v (\dim  E^{s}))  = E^{s}$. In particular, we have that  $M_{\xi}^{t} (E_{\xi})$ equals $E_{\xi}$.
\end{coro}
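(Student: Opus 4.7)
The plan is to chain together Propositions \ref{prop:ME} and \ref{prop:linind}, which have already done the heavy work, and then close the argument with a simple dimension count followed by a short injectivity-plus-finite-dimension argument for the invariance.

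First I would note that by Proposition \ref{prop:ME}, the hypothesis of having $\dim E^s$ multiplicatively independent eigenvalues automatically gives $E_\xi \subset E^s$, so only the reverse inclusion requires work, and it suffices to show $\dim E_\xi = \dim E^s$. For this, I would invoke Proposition \ref{prop:linind} (applied with $d-1 = \dim E^s$): the multiplicative independence of the $\dim E^s$ eigenvalues implies that the vectors $M_\xi^t v(1), \ldots, M_\xi^t v(\dim E^s)$ are linearly independent in $\RR^{\#A}$. A trivial observation—if $\sum c_i v(i) = 0$ then $\sum c_i M_\xi^t v(i) = 0$—then forces $v(1), \ldots, v(\dim E^s)$ themselves to be linearly independent. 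Therefore $\dim E_\xi \geq \dim E^s$, and combined with $E_\xi \subset E^s$ this gives $E_\xi = E^s$.

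For the final claim $M_\xi^t(E_\xi) = E_\xi$, I would recall that by the very definition of $E^s$ recalled at the start of Section \ref{subsec:KR} (item 3), the subspace $E^s$ is $M_\xi^t$-invariant and the condition $(M_\xi^{t})^{n}v \neq 0$ for all nonzero $v \in E^s$ and all $n \in \NN$ means exactly that $M_\xi^t|_{E^s}$ is injective. Since $E^s$ is finite-dimensional, injective endomorphisms are surjective, so $M_\xi^t(E^s) = E^s$. Combined with $E_\xi = E^s$, this yields the invariance.

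There is no substantial obstacle here: the corollary is essentially a packaging of Propositions \ref{prop:ME} and \ref{prop:linind}. The only minor subtlety worth spelling out is the step from linear independence of $\{M_\xi^t v(i)\}_i$ to linear independence of $\{v(i)\}_i$, and the use of finite-dimensional injective-implies-surjective for the invariance statement.
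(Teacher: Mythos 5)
Your argument is correct and is exactly the reasoning the paper intends: the corollary is stated as a ``direct corollary'' with no written proof, and your chain---$E_\xi \subset E^s$ from Proposition \ref{prop:ME}, linear independence of the $v(i)$ from Proposition \ref{prop:linind} (via the remark following it) to force equality of dimensions, and invariance from the $M_\xi^t$-invariance of $E^s$ together with injectivity of $M_\xi^t$ on $E^s$ in finite dimension---is precisely the intended packaging of those two propositions.
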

Notice that for a  unimodular Pisot substitution $\sigma$,  $\textrm{dim } E^{s} +1$ equals the degree of the associated Pisot number, or the number of letters in the alphabet, and the eigenvalues of $M_{\sigma}$ are all simple \cite{Canterini&Siegel:2001}. Thus, by Proposition  \ref{prop:vpPisot},  the proper W. I. Pisot substitution $\xi$ associated   to $\sigma$ in Corollary \ref{cor:Pisotsub}, fulfills the conditions of  Corollary \ref{coro:ME}.

\subsection{Discussion on working hypothesis}\label{sec:discuss}
In the next section we prove Theorem \ref{theo:main} as a corollary of a more general result obtained under the following hypothesis. 

\medskip

{\bf Hypotheses \texttt{P}.} {\it  Let $\xi$ be a  primitive  substitution on a finite alphabet $A_{\xi}$ such that:
\begin{enumerate}[label={\texttt{P}.\roman{enumi}}),ref=\texttt{P}.\roman{enumi}]

\item
\label{hypoii}  
The substitution subshift $(\Omega ({\xi}), S)$  admits  $d_{\xi} =\dim  E^{s} $ eigenvalues $\exp(2i\pi \alpha_{1} )$,  $\ldots,$ $\exp(2i \pi \alpha_{d_{\xi}})$ such that $1$,   $\alpha_{1}, \ldots, \alpha_{d_{\xi}}$  are rationally independent. 
\item\label{hypoiii} 
Its  Perron number $\beta$ satisfies $\beta \vert \det M^{t}_{\xi\vert E^{s}} \vert =1$. 
\end{enumerate}
}

Before to state this result let us discuss these hypotheses. 

All these hypotheses apply to the proper substitution $\xi$ of  Corollary  \ref{cor:Pisotsub} associated with a unimodular Pisot substitution on the alphabet $A$: The existence of  $\#A-1$ multiplicatively independent  (dynamical) eigenvalues comes from Proposition \ref{prop:vpPisot} and the observation made after it. Moreover, Corollary \ref{coro:wPisot} ensures $\dim E^{s} = \#A-1$, and then provides the property \eqref{hypoii}.    
The property  \eqref{hypoiii} is due to the fact that the space $E^{s}$ is spanned by the eigenspaces associated with the algebraic conjugates $\beta_{1}, \ldots, \beta_{\#A-1}$  of the leading  eigenvalue $\beta$ of $M_{\xi}$ that is a Pisot number. 
The unimodular hypothesis implies $\vert \beta \beta_{1} \cdots \beta_{\#A-1} \vert =1$.

It is interesting to recall that the  W.I. Pisot hypothesis  is not sufficient to ensure \eqref{hypoii} as illustrated by Example \ref{ex:Solomyak} which is weakly-mixing.
Let us show it. 

\medskip 

\textbf{Back to Example \ref{ex:Solomyak}.} First  observe  the substitution $\gamma^{2}$ is proper. 
Assume $\exp(2i\pi \alpha)$ is an eigenvalue of the subshift generated by $\gamma$. From Lemma \ref{lemme:condcont}, $\alpha H(1)$ is of the form $t v + w$ with $t\in \RR$, $w \in \ZZ^{3}$  and $v$ is the eigenvector  $(1+\sqrt{2}, 2+ \sqrt{2}, 1)^{t}$ of  the incidence matrix $M_{\gamma}^{2}$. This implies that $\alpha$ is an integer
and therefore the associated subshift is weakly-mixing.  

\medskip 
 This example also shows that \eqref{hypoiii} does not implies  \eqref{hypoii}.

Conversely \eqref{hypoii} does not implies  \eqref{hypoiii}, as illustrated by the next example.

\begin{exemple}\label{ex:P1notP2}\rm
The eigenvalues of the incidence matrix $M_\rho$ of the substitution $\rho$ defined by 
$$
1\mapsto 1111112222, 2\mapsto 1122 ,
$$
are $4+2\sqrt{3}, 4-2\sqrt{3}$.
The normalized Perron right eigenvector of $M_\rho$ is $(1/\sqrt{3}, 1-1/\sqrt{3})^t$.
For $\mu$ the unique invariant measure of the subshift $(\Omega(\rho) , S)$ generated by $\rho$ it is classical that $\mu ([1]) = 1/\sqrt{3}$.
Moreover from standard linear computation and Proposition \ref{prop:condcont} (or see \cite[Theorem 1]{Adamczewski:2004}) one obtains that $\exp (2i\pi/\sqrt{3})$ is an eigenvalue of $(\Omega(\rho) , S)$ and thus that hypothesis \eqref{hypoii} is satisfied. 
But \eqref{hypoiii} is not satisfied as $\beta \vert \det M^{t}_{\rho\vert E^{s}} =(4+2\sqrt{3})( 4-2\sqrt{3})=4$.
\end{exemple}

We will show that the hypotheses \eqref{hypoii}, \eqref{hypoiii} are sufficient to ensure the mesurable conjugacy with an exchange of domain (Theorem \ref{theo:main2}), but they are not sufficient to prove the Pisot Conjecture as shown by the following example due to B. Sing \cite{Sing:2006}.

\begin{exemple}\label{ex:Sing}\rm
The substitution 
$$ 0 \mapsto 0 \bar{1}, 1 \mapsto 0, \bar{0}\mapsto \bar{0}1, \text{ and  } \bar{1} \mapsto \bar{0}$$ 
is also of weakly irreducible Pisot type.
Using the Host criterion for eigenvalues \cite{Host:1986} one can check the associated subshift $(\Omega_{\text{Sing}},S)$ has the same eigenvalues as the Fibonacci substitution subshift $\Omega_{\text{Fibo}}$. Recall that this system is measurably isomorphic to  the golden mean rotation $(\mathbb{T} , R)$ (see e.g. \cite{Fogg:2002}).
Thus  \eqref{hypoii} is satisfied and one easily check \eqref{hypoiii} also is. 
However $(\Omega_{\text{Sing}},S)$ cannot be measurably isomorphic to the golden mean rotation $(\mathbb{T} , R)$. 
Indeed, suppose it is.
The map $a\mapsto a, \bar{a} \mapsto a$, $a\in \{ 0,1\}$, defines a factor map from $(\Omega_{\text{Sing}},S)$ onto $(\Omega_{\text{Fibo}},S)$. This map is invariant under the automorphism $\varphi \colon \Omega_{\text{Sing}}  \to \Omega_{\text{Sing}} $ (or invertible cellular automata) given by the block map switching the barred letters with the non-barred ones. 
Thus, this factor defines an endomorphism of $(\mathbb{T} , R)$, i.e.,  a measurable map  $\delta$ of $(0,1)$ commuting with the golden mean rotation ($\delta \circ R = R\circ \delta$ a.e.). 
Since the map $x\mapsto \delta (x) - x \in (0,1)$ is $R$-invariant, by ergodicity it is a.e. constant. 
It follows $\delta$ is a.e.  invertible, and the map $\varphi$ is a.e.  constant. This is impossible since  $\varphi$ is an automorphism and  $S$ has no fixed point.

We conclude that $(\Omega_{\text{Sing}},S)$ is not measurably isomorphic to the golden mean rotation.
However we will see $(\Omega_{\text{Sing}},S)$ is measurably conjugate to a domain exchange (see Theorem \ref{theo:main2} below). 
\end{exemple}

\subsection{Domain exchange factorization}\label{sec:proof}
In this section we prove  the following theorem which specifies our main result Theorem \ref{theo:main}.
Let $\pi : \RR^{d_{\xi}} \to  \RR^{d_{\xi}}/\ZZ^{d_{\xi}} ={{\mathbb T}}^{d_{\xi}}$ denote the canonical projection.
We recall that $\Lambda_E$ is the measure induced by the Lebesgue measure on $E$.
It could be helpful to consider Figure \ref{fig:blabla} to resume the maps involved in the next theorem.

 \begin{theo}
\label{theo:main2}
Let $\xi$ be a substitution satisfying the hypothesis \eqref{hypoii} and \eqref{hypoiii}.  
Let $(\Omega, S)$ be the associated  substitution subshift. 
Then, 

there exist a self-affine domain exchange transformation $(E, \B, \lambda_E, T)$ in $\RR^{d_{\xi}}$   and a continuous onto  map 
$\Fetoile \colon \Omega \to E\subset \mathbb{R}^{d_{\xi}}$ which  is a measurable conjugacy map between the two systems.

 Moreover, there is a measurable map $\H \colon E \to \RR^{d_{\xi}}$ such that
\begin{itemize}
 
  	\item The map $\pi \circ \H\circ \Fetoile$ defines a (continuous) factor map  from $ (\Omega, S)$ to the dynamical system associated with  a minimal rotation on  the torus ${{\RR}}^{d_{\xi}}/\ZZ^{d_{\xi}}$.
	\item There is  a  constant $R_\xi \ge 1$ such that the map $\pi \circ \H\circ \Fetoile$ is a.e. $R_\xi$-to-one ({\it i.e.} $ \#(\pi \circ \H\circ \Fetoile)^{-1}(\{y\}) = R_{\xi}$ for a.e. $y \in \TT^{d_{\xi}}$  with respect to the Lebesgue measure). 
 \end{itemize}
\end{theo}
 Let us recall  Theorem \ref{theo:main2} applies to any unimodular Pisot substitution (see the discussion in Section \ref{sec:discuss}) and then provides Theorem  \ref{theo:main}.

We refer to Section \ref{sec:comments} for the motivation and interpretation of the introduction of the maps involved in  this theorem.

Let us comment that the statement and the proof of Theorem \ref{theo:main2}  is simplified when we assume that the 
substitution $\xi$ is  proper (see Definition \ref{def:subproper}) and its incidence matrix has a trivial kernel. 
Indeed the function $\H\circ \Fetoile - \Fetoile$ takes only integer vector values. 
In this case one has $\pi \circ \Fetoile = \pi \circ \H \circ \Fetoile$ is a (continuous) factor map onto a minimal rotation on the torus.

The strategy to prove Theorem \ref{theo:main2} is to reduce to a proper substitution thanks to  Corollary \ref{coro:wPisot}.
Then we use  the  approximations  of the eigenfunctions  given by Proposition \ref{prop:condcont}  to obtain  a projection map $\F$ from the subshift to an Euclidean space.
After checking this map has good properties with respect to the shift map and the substitution, we follow the same strategy as in  \cite{Canterini&Siegel:2001}.

\bigskip

We start by setting up the elements and observations necessary for the proof of Theorem \ref{theo:main2}. 
Let $\xi$ be a proper substitution satisfying Hypothesis \eqref{hypoii}.
 By Formula   (\ref{eq:rn}) on the entrance time $r_{n}$, we get for each $i\in \{1, \ldots, d_{\xi}\}$ and $x\in \Omega (\xi )$: 
$$\alpha_i r_n (x)   =  \sum_{k=1}^{n-1} \langle s_k (x),(M_\xi^{t})^{k-1} \alpha_i H (1) \rangle \mod \ZZ.$$
Notice this formula is also true for every power of $\xi$ instead of $\xi$.
Then,  with Formula (\ref{eq:wetv}) on the vectors $v({i})$, up to consider a power of $\xi$,  we obtain
\begin{align*}
\alpha_i r_n (x)  & = \sum_{k=1}^{n-1} \langle s_k (x),(M_\xi^{t})^{k-1} (v (i)+w(i))\rangle \mod \ZZ \\
  & =  \langle s_1 (x),  w(i)\rangle + \sum_{k=1}^{n-1} \langle s_k (x),(M_\xi^{t})^{k-1} v (i)\rangle \mod \ZZ .
\end{align*}

Let $\F_n  = \left(  (\langle s_1 (x),  w(i)\rangle \right)_{1\leq i\leq d_{\xi}}^t+ \Fetoile_n (x)$ where  
$$
\Fetoile_n  = \left(\sum_{k=1}^{n-1} \langle s_k ,(M_\xi^{t})^{k-1} v (i)\rangle  \right)^{t}_{1\leq i\leq d_{\xi}}.
$$
The  Proposition \ref{prop:condcont} and Lemma \ref{lemme:condcont} ensure  the sequence  $(\F_n)_{n\geq 1}$ uniformly converges to a continuous  function $\F \colon \Omega ({\xi} )\to \RR^{d_{\xi}}$,  explicitly  defined for $x \in \Omega ({\xi})$ by
$$
\F(x)  = \left(  
\langle s_1 (x),  w(i)\rangle 
+
\sum_{k=1}^{+\infty} \langle s_k(x) ,(M_\xi^t)^{k-1} v (i)\rangle
\right)^{t}_{1\leq i\leq d_{\xi}}.
$$

Let $V$ be the matrix with rows $v (1)^{t}, \ldots, v (d_{\xi})^{t}$. 
Then, the map $\F$ may be written as
\begin{align}\label{eq:defPhitilde}
 \F(x) =  \G(x)   + \Fetoile (x),
 \end{align}
where
\begin{align}\label{eq:defPhiGamma} 
 \Fetoile (x) = \sum_{k=1}^{+\infty}V M_\xi^{k-1} s_k(x) \quad  
\hbox{and }  \quad \G(x)  =(\langle s_1 (x),  w(i)\rangle)^{t}_{1\le i\le d_{\xi}}.
 \end{align} 
The function $\G$ is the difference between $\Fetoile$ and $\F$. As we will see in the next lemma, $\F$ satisfies the desired commutation property with the shift map  and the torus rotation (Item \eqref{lemma:conjrot}). Unfortunately it may not permute the action of the substitution with that of  a matrix  as 
$\Fetoile$  does (Item \eqref{lemma:conjsub}). As explained in Section \ref{sec:comments}, the map $\G$ will enable us to modify the projection mod $\ZZ^{d_\xi}$  by $\pi \circ \H$ to get a factorization onto a torus rotation with relevant commutation properties of the substitution.
In addition, the expression of $\G$ shows it is locally constant. This will be useful to describe  topological properties of the set $\F(\Omega(\xi))$ (Proposition \ref{prop:Topologie}) and to show the fiber of the factor map is constant (Proposition \ref{prop:cst-one}).  

\begin{lemma}
\label{lemma:deltaV} Let $\xi$ be a proper substitution satisfying \eqref{hypoii}. There exist  a continuous map $\Delta  \colon \Omega (\xi ) \to  \RR^{A_{\xi}} $ and a  bijective linear map $N \colon \RR^{d_{\xi}} \to \RR^{d_{\xi}}$ such that  for $\alpha = (\alpha_{1}, \ldots, \alpha_{d_{\xi}})^{t}$ and for any $x \in \Omega ({\xi})$,
\begin{enumerate}
\item 
\label{lemma:conjrot}
$\F\circ S  (x) = \F (x) + \alpha  \mod \ZZ^{d_{\xi}} $;
\item
\label{lemma:decompF}
$\Fetoile (x)  = V \Delta (x)$; 
\item
$M_\xi^t V^t = V^t N$;
\item
\label{lemma:conjugate}
the matrix  $N$  is conjugate to the matrix $M^{t}_{\xi\vert E^{s}}$ restricted to the space $E^{s}$;
\item 
\label{lemma:conjsub}
$\Fetoile\circ \xi (x)  = N^t ( \Fetoile (x))$.
\end{enumerate}
\end{lemma}

\begin{proof}
The eigenfunctions given by \eqref{hypoii}  and their approximations in Proposition \ref{prop:condcont} (see also Relation \eqref{eq:tpsretour}) provide 
$$\F\circ S  (x) = \F (x) + \alpha  \mod \ZZ^{d_{\xi}}.$$

Let us prove Statement \eqref{lemma:decompF}.  
We have 
\begin{eqnarray}
\label{eq:Fn}
\Fetoile_n(x) =  & V  \left(\sum_{k=1}^{n-1}  M_\xi^{k-1}s_{k}(x) \right) \\
=& V  {\rm Proj} \left(\sum_{k=1}^{n-1} M_\xi^{k-1}s_{k}(x) \right), 
\end{eqnarray}
where ${\rm Proj} \colon \RR^{A_{\xi}} \to E_{\xi}= \textrm{Vect }(v(1), \ldots, v(d_{\xi}))$ denotes the orthogonal projection onto $E_{\xi}$.
Recall that   by Corollary \ref{coro:ME}, the space $E_{\xi}$ has dimension $d_{\xi}$.
Since $(\Fetoile_n)_{n\geq 1}$ uniformly converges (see Proposition \ref{prop:condcont} and Lemma \ref{lemme:condcont}), the projection ${\rm Proj} (\sum_{k=1}^{n-1} M_\xi^{k-1}s_{k}(x))$ converges when $n$ goes to infinity to the  vector $\Delta(x)$ belonging to $E_{\xi}$ for any $x \in \Omega (\xi )$.
Therefore, we obtain Statement  \eqref{lemma:decompF}.

Let us prove  the other statements.
The basic properties of $s_{n} \circ \xi$ (Lemma \ref{lemma:lemmeclassic}) give  for any $x \in \Omega (\xi )$ and $n > 2$,
\begin{equation}\label{eq:Fnxi}
\Fetoile_n\circ \xi  =  V M_\xi \left(\sum_{k=1}^{n-2} M_\xi^{k-1}s_{k} \right).
\end{equation}

By the $\RR$-independence of the vectors $v (i)$ (Proposition \ref{prop:linind}), the linear map  $V^t \colon \RR^{d_{\xi}} \to E_{\xi}$ is bijective and since $M_\xi^{t} (E_{\xi}) = E_{\xi}$ (Corollary \ref{coro:ME}), there exists a bijective linear map $N \colon \RR^{d_{\xi}}  \to\RR^{d_{\xi}} $ such that

\begin{align}
\label{align:eigen}
M_\xi^{t} V^{t} = V^{t} N.
\end{align}

This shows  Statement (4).
Therefore, using  \eqref{eq:Fnxi}, \eqref{align:eigen} and \eqref{eq:Fn} we obtain for $n>2$,
$$
\Fetoile_{n}\circ \xi =  V M_\xi \sum_{k=1}^{n-2} M_\xi^{k-1}s_{k} =    N^{t }  \Fetoile_{n-1}.
$$

Letting $n$ going to infinity   we get \eqref{lemma:conjsub} and  complete the proof.
\end{proof}

The proof we present below follows the same scheme that was used and that appeared explicitly in \cite{Canterini&Siegel:2001} and in other works as \cite{Host:1992}.
The main difference here is that the factor map $\F$ is not built in the same way.  In the context of  \cite{Canterini&Siegel:2001} its function $\F$ itself satisfies the relation \eqref{lemma:conjsub} in Lemma \ref{lemma:deltaV} with the matrix $N$ being diagonal. 
Here we need to decompose the factor map $\F$ into  a sum $\Fetoile +\G$ where $\Fetoile$ satisfies \eqref{lemma:conjsub} in Lemma \ref{lemma:deltaV}.
Actually, such decomposition becomes useless when the incidence matrix of the  proper substitution $\xi$ has a trivial kernel because then the vectors $w(i)$ have integer coordinates and  $\G$ takes integer vector values. This simplifies the proof of the theorem significantly.

Recall that $\mu$ denotes the unique probability shift-invariant measure  of the system $(\Omega ({\xi}), S)$, and  $\lambda$ denotes the Lebesgue measure on ${\mathbb R}^{d_{\xi}}$.
As the map $\Fetoile$ is continuous with values in ${\mathbb R}^{d_{\xi}}$, it is usefull to observe that for every Borel set $B$ of $\Omega (\xi)$ the image $\Fetoile(B)$ is  analytic, hence is Lebesgue measurable in ${\mathbb R}^{d_{\xi}}$.   

\begin{lemma}
\label{lemma:perron}
Let $\xi$ be a proper substitution satisfying  \eqref{hypoii},\eqref{hypoiii}.  There exists a constant $C$ such that for every letter $a \in  A_{\xi}$ we have:

\begin{enumerate}
\item
\label{item:perronconstant}
$\lambda (\Fetoile ([a])) = C \mu ( [a]) $,
\item
\label{item:xinperronconstant}
for  all integer $n$ large enough, $\Fetoile  ([a])$ is the union of  the measure theoretically disjoint sets 
$$
\Fetoile(S^{-k}\xi^n ([b])) , \hbox{ with } 0\leq k< |\xi^n (b)| ,  [a] \cap S^{-k} \xi^n ([b])\not =  \emptyset,
$$

\item
\label{item:borelperronconstant}
for any Borel set $B \subset [a]$, 
$$
\lambda (\Fetoile(B)) = C \mu (B) .
$$
\end{enumerate}
\end{lemma}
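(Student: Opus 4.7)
The plan is to prove (2) first, then deduce (1) from an eigenvector uniqueness argument powered by Hypothesis \texttt{P}(iii), and finally extend to (3) via Caratheodory extension. Throughout, I will combine the two commutation relations of Lemma \ref{lemma:deltaV}, namely $F\circ S = F+\alpha\ \mod \ZZ^{d-1}$ and $F\circ\xi=N^t F$, with the Kakutani-Rohlin structure of Proposition \ref{substKR}.

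First, fix $a\in A$. Because $(\P_{n+1})_{n\ge 0}$ generates the topology of $\Omega_\xi$ and $[a]$ is clopen, for every sufficiently large $n$ one has a disjoint decomposition $[a]=\bigsqcup_{(b,k)\in I_a} S^{-k}\xi^n([b])$ with $I_a=\{(b,k):0\le k<|\xi^n(b)|,\ S^{-k}\xi^n([b])\subset [a]\}$, which already supplies the covering in (2). The scaling of each piece is controlled by Lemma \ref{lemma:deltaV}: $F(\xi^n([b])) = (N^t)^n F([b])$, so $\lambda(F(\xi^n([b]))) = |\det N|^n \lambda(F([b])) = \beta^{-n}\lambda(F([b]))$ by Hypothesis \texttt{P}(iii); and since $F\circ S = F+\alpha\ \mod \ZZ^{d-1}$ with the integer correction locally constant on each Kakutani-Rohlin atom (thanks to continuity of $F$ and discreteness of $\ZZ^{d-1}$), the set $F(S^{-k}\xi^n([b]))$ is contained in a translate of $(N^t)^n F([b])$. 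Hence $\lambda(F(S^{-k}\xi^n([b])))\le \beta^{-n}\lambda(F([b]))$.

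Setting $L_a = \lambda(F([a]))$ and counting atoms via $\#\{k: (b,k)\in I_a\}=(M_\xi^n)_{a,b}$, the previous bound aggregates to the componentwise vector inequality $\vec{L}\le\beta^{-n}M_\xi^n\vec{L}$. Now I will pair this with a positive left Perron eigenvector $\vec{h}$ satisfying $\vec{h}^t M_\xi = \beta \vec{h}^t$: since all entries of $\vec{h}$ are strictly positive, $\langle\vec{h},\vec{L}\rangle \le \beta^{-n}\langle \vec{h},M_\xi^n\vec{L}\rangle=\langle\vec{h},\vec{L}\rangle$, forcing the componentwise inequality to be an equality. This delivers the two desired conclusions at once: the pieces $F(S^{-k}\xi^n([b]))$ must be pairwise $\lambda$-disjoint in $F([a])$ (any overlap would produce strict inequality somewhere), which is (2); and $\vec{L}$ is a positive $\beta^n$-eigenvector of $M_\xi^n$, hence proportional to $\vec{\mu}(1)$ by simplicity of the Perron eigenvalue. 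This yields a common constant $C$ with $\lambda(F([a])) = C\mu([a])$, proving (1). For (3), the set function $B\mapsto \lambda(F(B))$ is finitely additive on the Boolean algebra of clopen subsets of $[a]$ (two disjoint clopen subsets admit a common refinement by some $\P_{n+1}$, to which (2) applies) and agrees there with $C\mu$ by (1); Caratheodory extension and the fact that this algebra generates the Borel $\sigma$-algebra of $[a]$ yield the result for all Borel $B\subset [a]$.

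The main obstacle I anticipate is extracting measure-disjointness from the inequality. The relation $F\circ S = F+\alpha \mod \ZZ^{d-1}$ only identifies $F(S^{-k}\xi^n([b]))$ with $(N^t)^n F([b])$ up to an integer translation, so a priori the atoms could overlap and lose Lebesgue mass. Hypothesis \texttt{P}(iii) is precisely what makes the loop close: it calibrates $|\det N|=\beta^{-1}$ so that the Lebesgue contraction of $N^t$ exactly matches the invariant measure's decay $\beta^{-n}$, and only then does the Perron-Frobenius pairing force equality. Without (iii) one is stuck with a plain inequality and cannot conclude either (1) or (2).
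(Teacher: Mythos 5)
Your treatment of items (1) and (2) is correct and is essentially the paper's own argument: the same decomposition of $[a]$ into atoms $S^{-k}\xi^n([b])$ of $\P_{n+1}$, the same use of $F(\xi^n([b]))=(N^t)^nF([b])$ together with $|\det N^t|=1/\beta$ from Hypothesis \texttt{P} $iii)$ to get $\lambda(F(S^{-k}\xi^n([b])))\le\beta^{-n}\lambda(F([b]))$, and the same componentwise inequality $\vec L\le\beta^{-n}M_\xi^n\vec L$. The paper compresses the last step into ``from Perron's Theorem the inequality is an equality''; your pairing with a strictly positive left Perron eigenvector is exactly the way to make that precise, and it correctly forces both the measure-disjointness of the pieces and the proportionality $\vec L=C\vec\mu(1)$. (A cosmetic point: $F(S^{-k}\xi^n([b]))$ is in fact \emph{equal} to a translate of $(N^t)^nF([b])$, not merely contained in one, since the integer-valued correction $F\circ S-F-\alpha$ is locally constant; but your ``$\le$'' is harmless because the equality case absorbs the slack.)

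The gap is in item (3). You cannot invoke Carath\'eodory/uniqueness-of-extension for the set function $B\mapsto\lambda(F(B))$, because that set function is not known to be a measure on the Borel $\sigma$-algebra of $[a]$: images under $F$ of disjoint Borel sets may overlap, and $\lambda(F(\cdot))$ is a priori only monotone and countably \emph{sub}additive (also $F(B)$ is merely analytic, hence Lebesgue measurable, but not a Borel image in any controlled way). The extension theorem identifies the unique countably additive extension of $C\mu$ from the clopen algebra, but says nothing about a set function that is not additive to begin with. The standard repair (this is what the reference to Proposition 4.3 of Canterini--Siegel supplies) is a two-sided approximation: for Borel $B\subset[a]$, cover $B$ by an open set $U$, write $U$ as a countable disjoint union of clopen atoms, and use subadditivity plus items (1)--(2) to get $\lambda(F(B))\le C\mu(B)$; then apply the same bound to $[a]\setminus B$ and use $\lambda(F([a]))\le\lambda(F(B))+\lambda(F([a]\setminus B))$ together with $\lambda(F([a]))=C\mu([a])$ to obtain the reverse inequality. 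Note also that along the way you need $\lambda(F(S^{-k}\xi^n([b])))=C\mu(S^{-k}\xi^n([b]))$ for atoms, which is not literally item (1) but follows from $\lambda(F(S^{-k}\xi^n([b])))=\beta^{-n}\lambda(F([b]))$ combined with $\mu(\xi^n([b]))=\beta^{-n}\mu([b])$.
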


\begin{proof}
The  Item \eqref{lemma:conjrot} of Lemma \ref{lemma:deltaV} implies $\Fetoile \circ S(x) - \Fetoile(x) = 
\G(x) - \G(S(x)) + \alpha \mod \ZZ^{d_{\xi}}$, where $\G$ is defined in \eqref{eq:defPhiGamma}.
Since the function $s_{1}$ takes finitely many values and $\Fetoile$ is bounded, $\H_{diff}= \Fetoile \circ S  - \Fetoile$  is a continuous function taking values into a finite set. 
So the function $\H_{diff}$  is locally constant.
Hence, there exists  some integer $n_{0}\ge 0$ such that  $\H_{diff}$ is constant on each set $S^{-k} \xi^n([b])$, with $n > n_{0}$, $b\in A_{\xi}$  and  $0\leq k< |\xi^n (b)|$ (see Proposition \ref{substKR}).

Therefore, from Lemma \ref{lemma:lemmeclassic} and Item \eqref{lemma:conjsub} of Lemma \ref{lemma:deltaV}, for any such $b$ and $k$, there exists a vector  $\delta(k,b)\in \RR^{d_{\xi}}$ such that 
$$ 
\Fetoile(S^{-k} \xi^n ([b]))  = \delta (k,b) + \Fetoile (\xi^n ([b])) 
 = \delta (k,b) +(N^t)^n \Fetoile ([b]).
$$

By the very hypothesis $\eqref{hypoiii}$ and Item \eqref{lemma:conjugate} of Lemma \ref{lemma:deltaV}, we have  $\vert \det N^t \vert =1/ \beta$, so we get
\begin{align*}
\lambda ( \Fetoile(S^{-k} \xi^n ([b])) ) & = \lambda ( (N^t)^n \Fetoile ([b])) = \vert \det (N^t)^n \vert \lambda (\Fetoile([b])) \\
& = \frac{1}{\beta^n} \lambda (\Fetoile([b])).
\end{align*}

Let $a\in A_{\xi}$, the partitions of $\Omega ({\xi})$ in Proposition \ref{substKR} provide
$$
[a]  = \bigcup_{\substack{0\leq k< |\xi (j)|, b\in A_{\xi} \\ [a] \cap S^{-k} \xi^n ([b])\not = \emptyset}} S^{-k} \xi^n([b]) .
$$

Consequently,
\begin{align}
\label{align:perronequality}
\lambda ( \Fetoile ([a])) \leq &  \sum_{\substack{k,b; 0\leq k< |\xi^n (b)| ,\\   [a] \cap S^{-k} \xi^n ([b])\not =  \emptyset}} \frac{1}{\beta^n} \lambda (\Fetoile([b])) 
= \frac{1}{\beta^n }(M_\xi^n (\lambda ( \Fetoile ([b])))^t_{b\in A_{\xi}})_a.
\end{align}

From the  Perron's Theorem, the above inequality is an equality and $(\lambda ( \Fetoile( [b])))^t_{b\in A_{\xi}}$  is a multiple of the eigenvector $( \mu ([a]))^t_{a\in A_{\xi}} = \vec{\mu}(1)$ of the dominant eigenvalue $\beta^n$ of $M_{\xi}^n$. 
This shows Item  \eqref{item:perronconstant}. Notice that  the equality in \eqref{align:perronequality} also implies Item \eqref{item:xinperronconstant}.

To prove Item \eqref{item:borelperronconstant}, it is enough  to use the partitions of $\Omega ({\xi})$ given in Proposition \ref{substKR} and the ideas at the beginning of this proof. 
We left it to the reader (see the proof  of Proposition 4.3 in \cite{Canterini&Siegel:2001}).
\end{proof}

With the next two propositions, we continue to follow the approach (and the proofs) in \cite{Canterini&Siegel:2001}.

\begin{prop}
\label{prop:injcyl}
Let $\xi$ be a proper substitution satisfying  \eqref{hypoii},\eqref{hypoiii}.
The map $\Fetoile$ is  $\mu$-a.e.  one-to-one on each cylinder set $[a]$:
there exists a $\mu$-negligeable borelian  subset $\mathcal{M} \subset \Omega (\xi )$ such that  for any $x$ and $y$ in $[a]\setminus \mathcal{M}$ satisfying $\Fetoile (x) = \Fetoile(y)$, we have $x=y$.  
\end{prop}

\begin{proof}
Let $a\in A_{\xi}$. From Lemma \ref{lemma:perron}, the sets 
$$
\mathcal{N}_a^{(\ell)} = 
\bigcup_{\substack{(k_1,j_1)\not = (k_2,j_2) ;\\ 
0\leq k_1< |\xi^\ell (b_1)|,  [a] \cap S^{-k_1} \xi^\ell  ([b_1])\not = \emptyset \\
0\leq k_2< |\xi^\ell (b_2)|,  [a] \cap S^{-k_2} \xi^\ell ([b_2])\not = \emptyset}} 
\Fetoile(S^{-k_1} \xi^\ell([b_1])) \cap \Fetoile(S^{-k_2} \xi^\ell([b_2]))
$$

have zero $\lambda$-measure, for any $\ell\in \mathbb{N}$ large enough. Item \eqref{item:borelperronconstant} of Lemma \ref{lemma:perron} gives furthermore that the sets $\mathcal{M}_a^{(\ell)} = (\Fetoile)^{-1} (\mathcal{N}_a^{(\ell)})$ have zero measure with respect to $\mu$.

Let $x_1$ and $x_2$ be two distinct elements of $[a]$ such that $\Fetoile(x_1) = \Fetoile(x_2)$.
It suffices to show that they belong to some $ \mathcal{M}_a^{(\ell)}$.
Considering the partitions $\P_{\ell}$, $\ell\ge 0$, of Proposition \ref{substKR}, there exist infinitely many  $\ell \in \mathbb{N}$  with two distinct couples $(k_1, b_1)$ and $(k_2,b_2)$, such that $0\leq k_1< |\xi^\ell (b_1)|$, $0\leq k_2< |\xi^\ell (b_2)|$, $x_1 \in S^{-k_1} \xi^\ell ([b_1])$ and $x_2 \in S^{-k_2} \xi^\ell ([b_2])$.
Then, $x_1$ and $x_2$ belong to $\mathcal{M}_a^{(\ell)}$ for infinitely many $\ell$,   which achieves the proof.
\end{proof}

\begin{prop}
\label{prop:Finj}
Assume Hypotheses \eqref{hypoii},\eqref{hypoiii}  for a proper substitution $\xi$.
The sets $\mathcal M$ and $\mathcal N$ are negligeable  borelian sets and the map $\Fetoile: \Omega(\xi)\setminus {\mathcal M} \to \Fetoile(\Omega(\xi))\setminus {\mathcal N}$ is a bi-mesurable map.
\end{prop}

\begin{proof}
As  $\xi$ is proper, there exists a letter $a$ such that $\xi (\Omega (\xi ) )$ is included in $[a]$.
Therefore, from Proposition \ref{prop:injcyl}, the map $\Fetoile$ is one-to-one on $\xi (\Omega (\xi ) )$ except on a set $ \mathcal{M}$ of zero measure. 
By the basic properties of the map $\Fetoile$  (precisely Item \eqref{lemma:conjsub} of Lemma \ref{lemma:deltaV}), if two points $x,y \in \Omega ({\xi})$ have the same image through $\Fetoile$, then  $\Fetoile(\xi(x))= \Fetoile(\xi(y))$, and hence $x$ and $y$ belong to  $\xi^{-1}({\mathcal{M}})$.   

Recall that the induced system on $\xi(\Omega ({\xi}))$ is a factor of $(\Omega ({\xi}), S)$ via the map $\xi$ (see Relation \eqref{eq:sysinduit}). This implies that the measure $\mu (\xi^{-1}(\cdot))$  is invariant for the induced system $(\xi(\Omega ({\xi})),  S_{\xi(\Omega ({\xi}))})$. 
It is a classical fact that induced dynamical systems of uniquely ergodic dynamical systems are uniquely ergodic. 
Thus, $(\xi(\Omega ({\xi})),  S_{\xi(\Omega ({\xi}))})$ has a unique invariant probability measure: $\mu /\mu  (\xi(\Omega ({\xi})))$.  
Thus $\mu(\xi^{-1}({\mathcal{M}}))$ is proportional to $\mu({\mathcal M})$, so it is null. This achieves the proof. 
\end{proof}

 The following proposition provides some topological properties of the set $\phi(\Omega(\xi))$. 
 Its proof is a modification of the arguments in \cite{Kulesza:1995} Lemma 2.1. 
\begin{prop}\label{prop:Topologie} Assume Hypotheses \eqref{hypoii} for a proper substitution $\xi$.
\label{prop:Regul}
 For any clopen set $C$ in $\Omega ({\xi})$, the set $\Fetoile (C)$ is regular, {i.e.}, $$\overline{{\rm int \ } \Fetoile (C)} = \Fetoile (C),$$
  where ${\rm int\  } U$ denotes the interior of  the set $U$ for the usual  Euclidean topology.
\end{prop}
\begin{proof} First we show that the open set ${\rm int \ } \Fetoile (\Omega ({\xi}))$ is not empty.  Since  $1,  \alpha_{1}, \ldots, \alpha_{d_{\xi}}$ are rationally independent, by Lemma \ref{lemma:deltaV},  denoting by   $\pi$  the canonical projection $\RR^{d_{\xi}} \to  \RR^{d_{\xi}}/\ZZ^{d_{\xi}} = \TT^{d_{\xi}}$, the  continuous map  $\pi \circ \F  \colon \Omega ({\xi}) \to \TT^{d_{\xi}}$ has a dense image hence is onto.
It follows  that for any  small $\epsilon >0$, there exists a finite family $\V$  of integer vectors  such that  
$$ 
B_{\epsilon}(0) \subset   \bigcup_{p \in \V} \F(\Omega ({\xi}))+p.
$$
As in the proof of Lemma \ref{lemma:perron} we consider the function $\G: \Omega(\xi) \to \mathbb{R}^{d_{\xi}}$ defined by $ \G( x) = (\langle s_1 (x),  w(i)\rangle)_{1\le i\le d_{\xi}}$ and recall that $\F(x) = \G(x) +\Fetoile (x)$. Set $ \mathcal G$ to be the finite set of values of the map $\G$, thus we obtain
$$ B_{\epsilon}(0) \subset   \bigcup_{p \in \V} \F(\Omega ({\xi}))+p \subset   \bigcup_{g \in {\mathcal G}, p \in \V} \Fetoile (\Omega ({\xi}))+p +g.$$
By the Baire Category Theorem, the set $\Fetoile (\Omega ({\xi}))$ has a nonempty interior.

Now let $\Omega^* =\Omega ({\xi}) \setminus U$ where $U$ is the union of all open sets $O$ such that ${\rm int \ }\Fetoile (O) = \emptyset$.
From the previous remark it is a nonempty compact set.  Notice that $\Omega ({\xi}) \setminus \Omega^* $ is the union of countably many open (and then $\sigma$-compact) subsets. The image $\Fetoile (\Omega ({\xi}) \setminus \Omega^* )$ is then a countable union of compact sets each of those with an empty interior. Thus, again by the Baire Category Theorem, the set $\Fetoile (\Omega^*)$ is dense in $\Fetoile (\Omega ({\xi}))$ and since $\Omega^*$ is compact, $\Fetoile (\Omega^*)= \Fetoile (\Omega ({\xi}))$.

Let us show that $\Omega^*$ is $S$-invariant. 
Let $O$ be an open set in $\Omega ({\xi})$ such that ${\rm int\  }\Fetoile (O)$ is empty. 
By Lemma  \ref{lemma:deltaV}, the function $\Fetoile \circ S - \Fetoile $ takes finitely many values, hence it is constant on a partition by clopen sets $\P$  of $\Omega ({\xi})$.  
For any  atom $C$ of $\P$ the set $ {\rm int\ }\Fetoile (C\cap O)$ is empty  and  then   $ {\rm int \ }\Fetoile (S(C\cap O))$ is also empty. 
As $\Fetoile (S O) = \cup_{C\in \P} \Fetoile (S (C \cap O))$, the set $\Fetoile (S O)$ is a countable union of compact sets with empty interiors.  Again by the Baire Category Theorem, the set $\Fetoile (SO)$ has empty interior, and $\Omega^*$ is $S$-invariant.
   
The  minimality implies that $\Omega^* = \Omega ({\xi})$, so  the image by $\Fetoile $  of any open set has a nonempty interior. 

Finally, let $C$ be  a clopen set, and assume that $W := \Fetoile (C) \setminus \overline{{\rm int \ } \Fetoile (C)}$ is not empty. From the previous assertion, the set $\Fetoile ( \Fetoile^{-1}(W) \cap C) = U$ contains a ball and then $W$ intersects ${\rm int\  }\Fetoile (C)$: a contradiction. This shows the statement of the proposition. 
\end{proof}


Recall the  map $\G(x) =  (\langle s_1 (x),  w(i)\rangle)_{1\le i\le d_{\xi}}$, defined in \eqref{eq:defPhiGamma},  is constant on the sets  
$S^{-k}\xi ([a])$, $0\leq k< |\xi (a)|$, $a\in A_{\xi}$.
From Proposition \ref{prop:injcyl} one can define a map $\H : \Phi (\Omega (\xi )) \to \mathbb{R}^{d_{\xi}}$ almost everywhere by 
\begin{align}\label{eq:defPsi}
\H (y) = y +(\langle s_1 (x),  w(i)\rangle)_{1\le i\le d_{\xi}} \hbox{ where } y= \Phi (x).
\end{align}
 It follows it satisfies 
\begin{align}\label{eq:relPhiGamma} 
\H\circ \Fetoile = \Phi+ \G = \F, \quad \mu\text{-a.e.}.
\end{align}
Recall that  $\pi : \RR^{d_{\xi}} \to  \RR^{d_{\xi}}/\ZZ^{d_{\xi}} ={{\mathbb T}}^{d_{\xi}}$ denotes the canonical projection.

In the next proposition we use the fact that $\H\circ \Fetoile-\Fetoile$ is locally constant.

\begin{prop}\label{prop:cst-one} Assume Hypotheses \eqref{hypoii}, \eqref{hypoiii} for a proper substitution $\xi$.
There exists a uniformly discrete set $\Lambda$  such that for $\lambda$-a.e. $y \in \Fetoile(\Omega(\xi)),$ 
$$\{y_1- y_2;  \pi \circ \H  (y_1) = \pi \circ \H  (y_2)= y \} \subset \Lambda.$$ 
Moreover the maps $Z: \Omega ({\xi}) \to \mathbb{Z}\cup\{\infty\}$ and $Z_\H \colon \Fetoile (\Omega(\xi)) \to \mathbb{Z}\cup\{\infty\}$ defined by 
\begin{align*}
Z(x) &= \# (\pi \circ \H \circ \Fetoile )^{-1} (\{ \pi \circ \H \circ \Fetoile  (x) \} )  \hfill \text{ and } \\
Z_\H(y) &= \# (\pi \circ \H )^{-1} (\{ \pi \circ \H  (y) \} )
\end{align*}
 are finite and constant  $\mu$-a.e. and $\lambda$-a.e with the same constant.
\end{prop}
\begin{proof}
For any $z\in \mathbb{Z}^{d_{\xi}}$, set  $B_{z}$ to  be the set $\{ y \in \Fetoile (\Omega(\xi)); \ \exists y' \in \Fetoile(\Omega ({\xi})), y = y' + z \}$.
We have $B_z =  \Fetoile(\Omega ({\xi})) \cap (\Fetoile(\Omega ({\xi})) + z)$, so it is a Borel set.  
Notice that for any integer $n$, $Z_\H^{-1} (\{ n \} )$ is a finite intersection of such sets, so the map $Z_\H$ is measurable.
Since $Z = Z_\H \circ \Fetoile$ a.e., the map $Z$ is also measurable. 
Recall that  the map $\Fetoile $ is a.e.  one-to-one (Proposition \ref{prop:Finj})
and  $\H\circ \Fetoile  - \Fetoile =\G$ (see Equation \eqref{eq:relPhiGamma}).  
Recall   the map $\G$ takes values in  the finite set ${\mathcal H} = \{ (\langle s_1 (x),  w(i)\rangle)^{t}_{1\le i\le d_{\xi}}; x \in \Omega(\xi))\}$. 

Hence for two elements $\Fetoile(x)$ and $\Fetoile(x')$, with $x,x' \in \Omega(\xi)$ in the same $\pi \circ \H$ fiber ({\it i.e.}  $\pi \circ \H \circ \Fetoile(x) = \pi \circ \H\circ \Fetoile(x')$), the difference $\Fetoile(x)-\Fetoile(x')$ a.s. belongs to ${\mathbb Z}^{d_{\xi}}+ {\mathcal H} - {\mathcal H}$, a uniformly discrete set. This proves the first claim for $\Lambda = {\mathbb Z}^{d_{\xi}}+ {\mathcal H} - {\mathcal H}$.

It follows, by the compactness of the set $\Fetoile(\Omega(\xi))$, that the map $Z_\H$  is  a.e.  finite.  
Hence it is also the case for the map $Z$.
Moreover since it  is $S$-invariant, we conclude by ergodicity that the map $Z$ is a.e. constant.
Since $Z = Z_\H \circ \Fetoile$, the map $Z_\H$ is a.e. constant with the same constant.
\end{proof}

\begin{proof}[Proof of Theorem \ref{theo:main2}] 
Let $\xi \colon A_{\xi} \to A_{\xi}^{*}$ be a substitution satisfying the hypothesis \eqref{hypoii} and \eqref{hypoiii}.
By Corollary \ref{coro:wPisot}, we can moreover assume that $\xi$ is proper. 
So we can define the maps $\Fetoile, \F,\G, \H$ (see  \eqref{eq:defPhitilde}, \eqref{eq:defPhiGamma} and \eqref{eq:defPsi})    satisfying all the former properties proven in Section \ref{sec:proof}.

Let $E$ be the compact  set  $\Fetoile (\Omega ({\xi}))$. Since $\G$ takes finitely many values,   Lemma \ref{lemma:deltaV}, Relation \eqref{eq:defPhitilde}  and Proposition \ref{substKR},  ensure the existence of an integer $n$  such that  the map $\Fetoile \circ S - \Fetoile  $ is constant on each set $E_{n,a,k,} :=\Fetoile (S^{-k}\xi^{n}([a]))$ with $a \in A_{\xi}$ and $0 \le k < \vert \xi^{n}(a) \vert.$ Let $T$ be the transformation such that its restriction  on $E_{n,a,k}$ is the translation of the vector $(\Fetoile \circ S -\Fetoile )|_{E_{n,a,k}}$. Lemma \ref{lemma:perron}  ensures it is well defined almost everywhere on $E$ with respect to the Lebesgue measure $\lambda$. We have to show that this map defines a domain exchange. 

First, notice, from Proposition \ref{prop:Regul}, that each compact set $E_{n,a,k}$ is regular, hence has a positive Lebesgue measure. 
From Lemma \ref{lemma:perron}, the sets $(E_{n,a,k})_{a,k}$ are pairwise disjoint in measure for the measure $\lambda$. 
Observe, moreover, from the definition of $T$,  that  it   satisfies $\Fetoile \circ S = T \circ \Fetoile $ $\mu$-a.e.  Hence we get $\lambda(T(E)) = \lambda(E)$ and $T(E) \subset E$ modulo $\lambda$-null sets. Thus, it remains to show  that $T$ preserves the Lebesgue measure $\lambda$.

By Proposition \ref{prop:Finj} the map  $T$ is one-to-one except on a set of zero measure. As a consequence, the sets $(T(E_{n,a,k}))_{a,k}$ are also disjoint in measure. Furthermore, the map $T^{-1}$ is defined $\lambda$-a.e. and is a translation on a subset of full $\lambda$-measure of $T(E_{n,a,k})$. 
The family of sets $(E_{n,a,k})_{n,a,k}$ generates the Borel $\sigma$-algebra (propositions \ref{substKR} and \ref{prop:Finj}).  
Lemma \ref{lemma:perron} ensures that the map $T$ preserves the Lebesgue measure and $T$ defines a domain exchange transformation. 

Let us also mention that Item \eqref{lemma:conjsub} of Lemma \ref{lemma:deltaV} provides  this domain exchange is self-affine with respect to the sets $E_{n,a,k}$ and the linear part $(N^{t})^{n}$.  Finally, Proposition \ref{prop:Finj} shows this domain exchange is measurably conjugate to the subshift $(\Omega (\xi ), S)$.

We recall $\H \colon E \to \F(\Omega({\xi})) \subset \RR^{d_{\xi}}$ is the measurable  map defined by $\H\circ \Fetoile = \F$ $\mu$-a.e.. 
Proposition \ref{prop:cst-one} gives the map $\pi \circ \H \circ \Fetoile  \colon \Omega ({\xi}) \to \TT^{d_{\xi}}$ is $\lambda$-a.e. $R_\xi$-to-one for some integer $R_\xi$ and  is $\mu$-a.e. equal to $\pi \circ \F$ which is a (continuous) factor map onto a  minimal rotation on the torus $\TT^{d_{\xi}}$.
Hence $\pi \circ \H \circ \Fetoile $ is a measurable  a.e. $R_\xi$-to-one factor map from $(\Omega (\xi ), S)$ onto a minimal rotation on $\TT^{d_{\xi}}$. 
\end{proof}

\begin{proof}[Proof of Theorem \ref{theo:main}] 
As,  by Corollary \ref{cor:Pisotsub} and Proposition  \ref{prop:vpPisot}, unimodular Pisot substitutions satisfy the hypothesis \eqref{hypoii}, \eqref{hypoiii}, Theorem \ref{theo:main} is a direct consequence of Theorem \ref{theo:main2}.
\end{proof}

\subsection{ On the cardinality of the fibers}

 Proposition \ref{prop:cst-one} tells that the map $\pi \circ \Psi$ is a.s. $R_{\xi}$-to-one for some constant $R_{\xi}$. We show below that $R_{\xi} $ is the volume $\lambda (\Fetoile(\Omega ({\xi})))$ of the set $\Fetoile(\Omega ({\xi}))$.

\begin{prop} Assume Hypotheses \eqref{hypoii}-\eqref{hypoiii}.
We have $$\lambda(\Fetoile(\Omega ({\xi}))) = R_\xi .$$
\end{prop}
\begin{proof}
Recall that  the map $\Psi$ satisfies  $\Psi \circ \Phi = \tilde{\Phi }= \Phi + \G$ where $\G$ is defined in  \eqref{eq:defPhiGamma}, so that $\Psi$ restricted to a set $\Phi(S^{-k} \xi^n([a]))$, $a \in A_{\xi}$, $0 \le k < |\xi(a)|$, for $n$ large enough, is defined on a subset $E'_{n,a,k}$ of full measure and  is  a.s.   the translation by the constant value  $\G|_{S^{-k} \xi^n([a])}$.  
Thus the closure of $\Psi(E'_{n,a,k})$ is a translated of $\Phi(S^{-k} \xi^n([a]))$ and $\Psi$ admits a local continuous right inverse: a translation  $\Psi^{-1}_{E'_{n,a,k}} \colon \overline{\Psi(E'_{n,a,k})}  \to  \Phi(S^{-k} \xi^n([a]))$  such that $\Psi\circ \Psi^{-1}_{E'_{n,a,k}} = {\rm Id}$ a.s..

Since the map $\pi$ is open, by Proposition \ref{prop:Regul}, the sets $\pi(\overline{ \Psi (E'_{n,a,k})})$ are regular. 
Moreover, taking $n$ large enough, we can assume that the diameter of each set $\overline{\Psi (E'_{n,a,k})}$ is smaller than $1$ so that $\pi$ restricted to ${\overline{\Psi (E'_{n,a,k})}}$ is one-to-one. 

Let $I$ be a maximal (for the inclusion)   subset  of $\{ E'_{n,a,k}; a \in A_{\xi}, 0 \le k < |\xi(a)| \}$ such that $V=\bigcap_{E \in I} {\rm int}(\pi( \overline{\Psi (E)}))$ is not empty.
Since the sets are regular, if a set  $\pi (\overline{\Psi (E'_{n,a,k})}) \subset \TT^{d_{\xi}}$ intersects an open set  $O$, then the interior of $\pi (\overline{\Psi (E'_{n,a,k})})$ also intersects $O$.
It follows  from the maximality that: 
\begin{itemize}
\item[($\texttt{H}_{1}$)]\label{it:1}  The open set $(\pi \circ \Psi)^{-1}(V)$   intersects no set $\overline{E'_{n,a,k}}$   where  $E'_{n,a,k}$  does not belong to $I$.

\end{itemize}
 
So, the map $\pi$ restricted to ${\overline{\Psi (E'_{n,a,k})}}$ being one-to-one, almost every element in $V$ admits  $\# I$ pre-images in $\cup_{a,k} E'_{n,a,k}$ with respect to the  map $\pi \circ \Psi$. 
Since $V$ has a positive Lebesgue measure, Proposition  \ref{prop:cst-one} ensures $\# I = R_{\xi}$. 

Moreover, by the definition of $V$,  for any open subset $U \subset ]0,1[^{d_{\xi}}$ such that $\pi(U) \subset V$, one has 
\begin{itemize}
\item[($\texttt{H}_{2}$)]\label{it:2} for any set $E \in I$, there exists an integer vector $z \in \ZZ^{d_{\xi}}$ such that $U+z \subset \overline{\Psi(E)}$.
\end{itemize}

Denoting  $\lambda_{\TT^{d_{\xi}}}$  the Lebesgue measure on the torus $\TT^{d_{\xi}}$, it follows 

$$
\begin{array}{ll}
\lambda(U) &=  \lambda_{\TT^{d_{\xi}}}(\pi(U)) \\
&= \frac 1{\lambda((\Phi(\Omega(\xi)))}\lambda((\pi \circ \Psi)^{-1}(\pi(U))) \\
&=\frac 1{\lambda(\Phi(\Omega(\xi)))}\lambda(\Psi^{-1}(\bigcup_{E \in I, z \in \ZZ^{d_{\xi}}} \overline{\Psi(E)} \cap  (U+z) ) ) \\
&= \frac 1{\lambda(\Phi(\Omega(\xi)))}\lambda(\bigcup_{E  \in I, z \in \ZZ^{d_{\xi}}} \Psi^{-1}_{E}(U+z)\cap E) \\
&= \frac{R_{\xi}}{\lambda(\Phi(\Omega(\xi)))} \lambda(U).
\end{array}
$$
The first equality comes from properties of the Lebesgue measure on the torus. The second 
one comes from the fact that, by unique ergodicity, the pullback image of the Lebesgue measure on the torus  by the measure theoretical factor map $\pi \circ \Psi$ is the normalized measure. 
The third equality follows from  ($\texttt{H}_{1}$) and ($\texttt{H}_{2}$). The properties of the maps $\Psi_{E}^{-1}$ provide the fourth equality. The last one follows from the facts the sets $E'_{n,a,k}$ form a measurable partition of $\Phi(\Omega(\xi))$, the maps $\Psi_{E}^{-1}$ are translations and $\# I = R_{\xi}$. This provides the conclusion.
\end{proof}

The function $\pi \circ \F \colon \Omega(\xi) \to \TT^{d_{\xi}}$ is a continuous factor map. We provide here a topological characterization of the constant $R_\xi$.
\begin{prop}\label{prop:min} Assume Hypotheses \eqref{hypoii}-\eqref{hypoiii}.
We have 
$$R_\xi= \inf_{x \in \Omega(\xi)} \#(\pi \circ \F )^{-1} (\{\pi  \circ \F (x)\}).$$
\end{prop}
\begin{proof} 
Let $\tilde{\Omega}(\xi) \subset  \Omega(\xi)$ be a $S$-invariant set of full measure such that the map $ x \mapsto \#(\pi \circ \H)^{-1} (\{\pi  \circ \F (x)\})$ is constant, equal to $R_\xi$ and any $\pi \circ \H$ fiber is uniformly discrete (Proposition \ref{prop:cst-one}).  
Let $x \in\tilde{\Omega}(\xi)$ and for any integer $n$, let $y_1^{(n)}, \ldots, y_{R_\xi}^{(n)} \in \Fetoile(\Omega(\xi))$ be $R_\xi$  elements  in $(\pi \circ \H)^{-1} (\{\pi  \circ \F (S^n x)\})$ and 
let $x_1^{(n)}, \ldots, x_{R_\xi}^{(n)} \in \Omega(\xi)$ be their pre-image by $\Fetoile$: $\Fetoile(x_i^{(n)}) = y_i^{(n)}$.
Since  the set of $y_j^{(n)}$ is uniformly discrete and the map $\Fetoile$ is uniformly continuous,  the set of  points $\{ x_j^{(n)}; j = 1, \ldots,R_\xi\}$ is uniformly (in $n$) separated. 

Let $x_0 \in \Omega(\xi)$ be such that $\#(\pi \circ \F)^{-1} (\{\pi  \circ \F (x_0)\}) = \inf_{x \in \Omega(\xi)} \#(\pi \circ \F)^{-1} (\{\pi  \circ \F (x)\})$. 
Taking a sequence of integer $(n_i)_i$ such that $S^{n_i} x$ goes to $x_0$ when $i$ goes to infinity,  and taking  accumulation points of  $x_j^{(n_i)}$, provides $R_\xi$  different points in $(\pi \circ \F)^{-1} (\{\pi  \circ \F (x_0)\})$. This proves the claim.
\end{proof}
As a consequence of Proposition  \ref{prop:min}, to prove the Pisot conjecture it is enough to show the existence  of  a point in the torus that admits only one preimage by the  continuous map $ \pi \circ \F$. A similar observation was done in \cite{Baker&Barge&Kwapisz:2006}.

\bibliography{pisotjuillet2021}

\end{document}